\documentclass[a4paper,oneside]{amsart}
\usepackage[utf8]{inputenc}
\usepackage[a4paper,margin=3cm]{geometry} %showframe=true
\usepackage{bm}
\usepackage{amsmath}
\usepackage{amsthm}
\usepackage{amscd}
\usepackage{amssymb}
\usepackage{MnSymbol}
\usepackage{latexsym}
\usepackage{eucal}
\usepackage{dsfont}
\usepackage{mathtools}
\usepackage{enumitem}
\usepackage{todonotes}

\usepackage{verbatim}
\usepackage{graphicx}
\usepackage{float}
\usepackage{array,booktabs}

\usepackage{pdflscape}

\usepackage[colorlinks,pdftex]{hyperref}
\hypersetup{
linkcolor=black,
citecolor=black,
pdftitle={},
pdfauthor={Franziska K\"uhn},
pdfkeywords={},
}

\widowpenalty=10000
\clubpenalty=10000
\displaywidowpenalty=10000

\makeatletter
\renewcommand\section{\@startsection{section}{1}{0mm}{-1.5\baselineskip}{\baselineskip}{\normalsize\bfseries\sffamily}}
\renewcommand\subsection{\@startsection{subsection}{1}{0mm}{-\baselineskip}{\baselineskip}{\normalsize\bfseries\sffamily}}
\makeatother

\makeatletter
\def\@fnsymbol#1{\ensuremath{\ifcase#1\or *\or **\or \dagger\or \ddagger\or
   \mathsection\or \mathparagraph\or \|\or \dagger\dagger
   \or \ddagger\ddagger \else\@ctrerr\fi}}

\newlength{\preskip}
\setlength{\preskip}{11\p@ \@plus.1\p@ minus 1\p@}
\newlength{\postskip}
\setlength{\postskip}{11\p@ \@plus.1\p@ minus 1\p@}
\makeatother

\newtheoremstyle{theorem}{\preskip}{\postskip}{\itshape}{}{\bfseries}{}
{.5em}{\textbf{\thmname{#1}\thmnumber{ #2} (\thmnote{ #3})}}
\newtheoremstyle{definition}{\preskip}{\postskip}{\normalfont}{0pt}{\bfseries}{}{.5em}{}
\newtheoremstyle{remark}{\preskip}{\postskip}{\normalfont}{0pt}{\bfseries}{}{.5em}{}

\swapnumbers
\theoremstyle{theorem} \newtheorem{thm}{Theorem}[section]
\theoremstyle{theorem} \newtheorem{lem}[thm]{Lemma}
\theoremstyle{theorem} \newtheorem{prop}[thm]{Proposition}
\theoremstyle{theorem} \newtheorem{kor}[thm]{Corollary}
\theoremstyle{definition} \newtheorem{defn}[thm]{Definition}
\theoremstyle{remark} \newtheorem{bem_thm}[thm]{Remark}
\theoremstyle{remark} 
\theoremstyle{definition} 
\theoremstyle{definition} \newtheorem*{ack}{Acknowledgements}
\theoremstyle{remark} \newtheorem{bem}[thm]{Remark}
\theoremstyle{remark} 
\theoremstyle{definition}  \newtheorem{bsp}[thm]{Example}
\theoremstyle{definition}  
\theoremstyle{definition} \newtheorem*{thmo}{Theorem}

\DeclareMathOperator \re {Re}

\DeclareMathOperator \sgn {sgn}

\DeclareMathOperator \spt {supp}

\DeclareMathOperator \tr {tr}

\newcommand{\I}{\mathds{1}}

\newcommand\fa{\qquad \text{for all \ }}

\newcommand\mc[1] {\mathcal{#1}}
\newcommand\mbb[1] {\mathds{#1}}

\newcommand{\eps}{\varepsilon}

       % Top strut
 % Bottom strut

\hyphenation{Ha-bi-li-ta-ti-ons-schrift}

\begin{document}

\title[Existence of (Markovian) solutions to martingale problems]{Existence of (Markovian) solutions to martingale problems associated with L\'evy-type operators}
\author[F.~K\"{u}hn]{Franziska K\"{u}hn} 
\address[F.~K\"{u}hn]{Institut de Math\'ematiques de Toulouse, Universit\'e Paul Sabatier III Toulouse, 118 Route de Narbonne, 31062 Toulouse, France}
\email{franziska.kuhn@math.univ-toulouse.fr}
\subjclass[2010]{Primary: 60J35. Secondary: 60J25, 60H10, 60J75, 45K05, 35S05, 60G51}
\keywords{martingale problem, pseudo-differential operator, Markovian selection, existence result, discontinuous coefficents, Krylov estimate, jump process,  L\'evy-driven stochastic differential equation, Harnack inequality, viscosity solution.}

\begin{abstract}
Let $A$ be a pseudo-differential operator with symbol $q(x,\xi)$. In this paper we derive sufficient conditions which ensure the existence of a solution to the $(A,C_c^{\infty}(\mathbb{R}^d))$-martingale problem. If the symbol $q$ depends continuously on the space variable $x$, then the existence of solutions is well understood, and therefore the focus lies on martingale problems for pseudo-differential operators with discontinuous coefficients. We prove an existence result which allows us, in particular, to obtain new insights on the existence of weak solutions to a class of L\'evy-driven SDEs with Borel measurable coefficients and on the the existence of stable-like processes with discontinuous coefficients. Moreover, we establish a Markovian selection theorem which shows that -- under mild assumptions -- the $(A,C_c^{\infty}(\mathbb{R}^d))$-martingale problem gives rise to a strong Markov process. The result applies, in particular, to L\'evy-driven SDEs. We illustrate the Markovian selection theorem with applications in the theory of non-local operators and equations; in particular, we establish under weak regularity assumptions a Harnack inequality for non-local operators of variable order.
\end{abstract}
\maketitle

L\'evy-type operators appear naturally in the theory of stochastic processes, for instance as infinitesimal generators of L\'evy(-type) processes \cite{ltp,jac2} and in the context of stochastic differential equations \cite{sde,kurtz}. A L\'evy-type operator is defined on the smooth functions with compact support $C_c^{\infty}(\mbb{R}^d)$ and has a representation of the form
\begin{align*}
	Af(x) &= b(x) \cdot \nabla f(x) + \frac{1}{2} \tr(Q(x) \cdot \nabla^2 f(x)) \\
	&\quad + \int_{\mbb{R}^d \backslash \{0\}} \left( f(x+y)-f(x)-\nabla f(x) \cdot y \I_{(0,1)}(|y|) \right) \, \nu(x,dy)
\end{align*}
where $(b(x),Q(x),\nu(x,dy))$ is for each fixed $x \in \mbb{R}^d$ a L\'evy triplet. Equivalently, $A$ can be written as a pseudo-differential operator \begin{equation*}
	Af(x) =- \int_{\mbb{R}^d} e^{ix \cdot \xi} q(x,\xi) \hat{f}(\xi) \, d\xi 
\end{equation*}
with symbol $q$, \begin{equation*}
	q(x,\xi) := - ib(x) \cdot \xi + \frac{1}{2} \xi \cdot Q(x) \xi + \int_{\mbb{R}^d \backslash \{0\}} \left( 1-e^{iy \cdot \xi} + iy \cdot \xi \I_{(0,1)}(|y|) \right) \, \nu(x,dy).
\end{equation*}
In this paper, we are interested in the martingale problem associated with the L\'evy-type operator, i.\,e.\ for a given initial distribution $\mu$ we study probability measures $\mbb{P}^{\mu}$ on the Skorohod space $D[0,\infty)$ such that the canonical process $(X_t)_{t \geq 0}$ satisfies $\mbb{P}^{\mu}(X_0 \in \cdot) = \mu$ and \begin{equation*}
	M_t := f(X_t)-f(X_0)- \int_0^t Af(X_s) \, ds, \qquad t \geq 0,
\end{equation*}
is a $\mbb{P}^{\mu}$-martingale for any $f \in C_c^{\infty}(\mbb{R}^d)$; as usual we set $\mbb{P}^x := \mbb{P}^{\delta_x}$. If the martingale problem is well-posed (i.\,e.\ $\mbb{P}^{\mu}$ is unique for any initial distribution $\mu$), then this gives a lot of additional information on the stochastic process; for instance, well-posedness of the martingale problem implies the Markov property of $(X_t)_{t \geq 0}$, see e.\,g.\  \cite[Theorem 4.4.2]{ethier}, and under some weak additional assumptions $(X_t)_{t \geq 0}$ is a Feller process, cf.\ \cite{mp}. It is, however, in general difficult to prove the well-posedness of the martingale problem (see e.\,g.\ \cite{ltp,matters} for a survey on known results), and for many interesting examples it is known that well-posedness does not hold. It is therefore of great interest to study properties of solutions to martingale problems which are not necessarily well-posed. This paper has three parts. \par
Firstly, we are interested in finding sufficient conditions on the operator $A$ (or its symbol $q$) which ensure the existence of a solution to the $(A,C_c^{\infty}(\mbb{R}^d))$-martingale problem. If $q$ has continuous coefficients, i.\,e.\ $x \mapsto q(x,\xi)$ is continuous for all $\xi$, then the existence of solutions is well understood, cf.\ \cite[Theorem 3.2]{hoh95} and \cite[Corollary 3.2]{mp}. The situation is more delicate if $q$ is discontinuous, and we are not aware of a general existence result in the discontinuous setting (see Section~\ref{exi} for a detailed discussion of known results). In this paper, we will show that a solution to the martingale problem for a L\'evy-type operator $A$ (with possibly discontinuous coefficients) exists if $A$ can be approximated by a sequence of L\'evy-type operators $A_n$, $n \geq 1$, satisfying a Krylov estimate, cf.\ Theorem~\ref{exi-9} for the precise statement. Combining the  result with heat kernel estimates obtained in \cite{matters}, we obtain a new existence result for weak solutions to L\'evy-driven SDEs with Borel measurable coefficients, cf.\ Corollary~\ref{exi-13}. Moreover, Theorem~\ref{exi-9} allows us to prove the existence of stable-like processes with discontinuous coefficients, cf.\ Example~\ref{exi-17} and Example~\ref{exi-19}. \par
Secondly, we will study under which assumptions a solution to the martingale problem gives rise to a (strong) Markov process. More precisely, we will investigate the following question: Assuming that for each initial distribution $\mu$ there exists a solution to the $(A,C_c^{\infty}(\mbb{R}^d))$-martingale problem with initial distribution $\mu$, i.\,e.\ \begin{equation*}
	\Pi_{\mu} := \{\mbb{P}; \text{$\mbb{P}$ is a solution to the martingale problem with initial distribution $\mu$}\} \neq \emptyset,
\end{equation*}
then under which assumptions can we choose $\mbb{P}^x \in \Pi_{\delta_x}$ such that $(X_t,\mbb{P}^x; x \in \mbb{R}^d, t \geq 0)$ is a strong Markov process? Krylov \cite{krylov} proved an abstract criterion for the existence of a Markovian selection for a large class of operators $A$ (which need not be L\'evy-type operators) and applied it to establish a Markovian selection theorem for diffusions (i.\,e.\ $A$ is a local L\'evy-type operator, $\nu=0$). Krylov's criterion has been refined by Ethier \& Kurtz, cf.\ \cite[Section 4.5]{ethier}; roughly speaking, they show that, for ``nice'' operators $A$, a certain compact containment condition implies the existence of a Markovian selection. The result is the key tool to prove a Markovian selection theorem for L\'evy-type operators; in particular, we obtain the following statement, which seems to be new.

\begin{thmo}
	Let $A$ be a L\'evy-type operator with symbol $q$. If $x \mapsto q(x,\xi)$ is continuous for all $\xi \in \mbb{R}^d$, $q$ is locally bounded, i.\,e.\ \begin{equation*}
		\forall R>0: \quad \sup_{|x| \leq R} \sup_{|\xi| \leq 1} |q(x,\xi)|< \infty,
	\end{equation*}
	and $q$ is locally uniformly continuous at $\xi=0$
	\begin{equation*}
		\lim_{R \to \infty} \sup_{|y| \leq R} \sup_{|\xi| \leq R^{-1}} |q(y,\xi)| = 0, 
	\end{equation*}
	then there exists a conservative strong Markov process $(X_t,\mc{F}_t,\mbb{P}^x; x \in \mbb{R}^d, t \geq 0)$ such that $\mbb{P}^x$ is, for each $x \in \mbb{R}^d$, a solution to the $(A,C_c^{\infty}(\mbb{R}^d))$-martingale problem with initial distribution $\mu = \delta_x$.
\end{thmo}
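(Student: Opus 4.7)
My plan is to combine a classical existence result for continuous-symbol martingale problems with the abstract Markovian selection theorem of Ethier \& Kurtz \cite[Theorem 4.5.19]{ethier}. The argument splits into three steps: (a) produce, for every starting point $x \in \mbb{R}^d$, a solution $\mbb{P}^x$ to the $(A,C_c^{\infty}(\mbb{R}^d))$-martingale problem; (b) establish a uniform compact containment estimate for the family of all such solutions; (c) feed (a) and (b) into the Ethier--Kurtz abstract selection criterion.

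For step (a), hypotheses (i)--(iii) place us in the classical continuous-symbol setting, so the existence theorems of Hoh \cite{hoh95} (see also \cite[Corollary 3.2]{mp}) yield $\Pi_{\delta_x}\neq\emptyset$ for every $x$, and by convex mixing $\Pi_{\mu}\neq\emptyset$ for every initial distribution $\mu$.

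The main work will lie in step (b). I plan to use a Schilling-type maximal inequality: applying Dynkin's formula to a smooth cut-off function adapted to a ball of radius $R$, together with the sub-additivity of $|\xi|\mapsto\sup_{|y|\le R}|q(y,\xi)|$ in $\xi$, one obtains for every $\mbb{P}\in\Pi_{\delta_x}$
\begin{equation*}
	\mbb{P}\Bigl(\sup_{s\le t} |X_s - x|\ge R\Bigr) \le c\, t\, \sup_{|y-x|\le R}\sup_{|\xi|\le R^{-1}} |q(y,\xi)|.
\end{equation*}
Assumption (iii), combined with local boundedness (ii), forces the right-hand side to vanish as $R\to\infty$ uniformly in $x$ on compacts. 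This simultaneously supplies the compact containment needed for step (c) and the conservativeness (non-explosion) of every candidate selection.

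For step (c), $\Pi_{\mu}$ is convex because the defining martingale property is linear in $\mbb{P}$, and $\mu\mapsto\Pi_{\mu}$ is measurable in the appropriate sense (as the image of $x\mapsto\Pi_{\delta_x}$ under mixing with $\mu$). With the compact containment from step (b) in hand, \cite[Theorem 4.5.19]{ethier} produces a measurable map $x\mapsto\mbb{P}^x\in\Pi_{\delta_x}$ for which $(X_t,\mbb{P}^x)$ enjoys the strong Markov property. The principal obstacle is the maximal inequality in step (b), whose constants must be sharp enough that condition (iii) does the work; the remaining checks---convexity, measurable dependence, and promoting non-explosion along a single solution to conservativeness of the selected Markov family---are routine but need to be carried out with care.
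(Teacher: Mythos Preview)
Your proposal is correct and follows essentially the same route as the paper: existence of solutions via the continuous-symbol result \cite[Corollary 3.2]{mp}, compact containment via a Schilling-type maximal inequality obtained from optional stopping with a cut-off function (this is the paper's Proposition~\ref{cont-9}), and then the abstract Ethier--Kurtz selection theorem \cite[Theorem 4.5.19]{ethier}. The only cosmetic difference is that the paper states the maximal inequality in the centred form $\mbb{P}(\sup_{s\le t}|X_s|\ge R,\,|X_0|\le r)\le ct\sup_{|y|\le R}\sup_{|\xi|\le R^{-1}}|q(y,\xi)|$, which matches hypothesis (iii) directly, whereas your version is centred at $x$; the two are equivalent after an obvious shift and enlargement of $R$.
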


If the symbol $q$ does not have continuous coefficients, we have to assume additionally the existence of a solution to the martingale problem for any initial distribution $\mu$, cf.\ Theorem~\ref{cont-1} for details. As a by-product, we obtain a sufficient condition for the existence of a Markovian (weak) solution to L\'evy-driven SDEs, cf.\ Corollary~\ref{cont-3}. \par
In the third, and final, part of the paper, we will illustrate the well-established fact that there is a strong connection between probability theory and the analysis of PDEs and pseudo-differential operators. We will present two applications of Markovian selection theorems in the theory of non-local operators and equations. The first one is a Harnack inequality for a class of pseudo-differential operators, cf.\ Section~\ref{har}, and the second one concerns viscosity solutions to a certain integro-differential equation, cf.\ Section~\ref{vis}.  \par \medskip

%The paper is organized as follows. In Section~\ref{def} we introduce basic definitions and notation. Section~\ref{exi} is concerned with sufficient conditions for the existence of solutions to martingale problems with discontinuous coefficients. In Section~\ref{cont} we establish the Markovian selection theorem for martingale problems associated with L\'evy-type operators. Applications of the Markovian selection theorems are presented in Section~\ref{ana}.

\section{Preliminaries} \label{def}

We consider $\mbb{R}^d$ endowed with the Borel $\sigma$-algebra $\mc{B}(\mbb{R}^d)$ and write $B(x,r)$ for the open ball centered at $x \in \mbb{R}^d$ with radius $r>0$; $\mbb{R}^d_{\partial}$ is the one-point compactification of $\mbb{R}^d$. The transpose of a matrix $A \in \mbb{R}^{d \times d}$ is denoted by $A^T$. If a certain statement holds for $x \in \mbb{R}^d$ with $|x|$ sufficiently large, we write ``for $|x| \gg 1$''. We denote by $C(\mbb{R}^d)$ the space of continuous functions $f: \mbb{R}^d \to \mbb{R}$; $C_{\infty}(\mbb{R}^d)$ (resp.\ $C_b(\mbb{R}^d)$) is the space of continuous functions which vanish at infinity (resp.\ are bounded). A function $f: [0,\infty) \to \mbb{R}^d$ is in the Skorohod space $D[0,\infty)$ if $f$ is right-continuous and has finite left-hand limits in $\mbb{R}^d$. On  $C_b^2(\mbb{R}^d)$, the space of two times continuously differentiable functions which are bounded (with its derivatives), we define a norm by \begin{equation*}
	\|f\|_{(2)} := \|f\|_{\infty} + \|\nabla f \|_{\infty} + \|\nabla^2 f\|_{\infty}, \qquad f \in C_b^2(\mbb{R}^d),
\end{equation*}
here $\nabla f$ and $\nabla^2 f$ are the gradient and Hessian of $f$, respectively. We write \begin{equation*}
	\|f\|_{\varrho} := \|f\|_{\infty}+ \sup_{\substack{x,y \in \mbb{R}^d \\ x \neq y}} \frac{|f(x)-f(y)|}{|x-y|^{\varrho}}, \qquad \varrho \in (0,1]
\end{equation*}
for the H\"older norm of a function $f$. The space of bounded Borel measurable functions $f: \mbb{R}^d \to \mbb{R}$ is denoted by $\mc{B}_b(\mbb{R}^d)$, and $\mc{P}(\mbb{R}^d)$ is the family of probability measures on $(\mbb{R}^d,\mc{B}(\mbb{R}^d))$. \par
For a filtration $(\mc{F}_t)_{t \geq 0}$ on a measurable space $(\Omega,\mc{A})$ we set $\mc{F}_{\infty} := \sigma(\mc{F}_t; t \geq 0)$. If $\tau: \Omega \to [0,\infty]$ is an $\mc{F}_t$-stopping time, i.\,e.\ $\{\tau \leq t\} \in \mc{F}_t$ for all $t \geq 0$, then \begin{equation*}
	\mc{F}_{\tau} := \{A \in \mc{F}_{\infty}: \forall t \geq 0: \, \, A \cap \{\tau \leq t\} \in \mc{F}_t\}
\end{equation*}
is the $\sigma$-algebra associated with $\tau$. For a probability measure $\mbb{P}$ on $(\Omega,\mc{A})$ and a bounded $\mc{A}$-measurable random variable $Y$ we denote by \begin{equation*}
	\mbb{E} Y := \int_{\Omega} Y(\omega) \, \mbb{P}(d\omega)
\end{equation*}
the expectation with respect to $\mbb{P}$; we write $\mbb{E}_{\mbb{P}}$ if we need to emphasize the underlying probability measure $\mbb{P}$.  \par
We will usually work on the Skorohod space $\Omega = D[0,\infty)$ endowed with the Borel $\sigma$-algebra induced by the Skorohod topology. We denote by $X_t(\omega) := \omega(t)$, $\omega \in D[0,\infty)$, the canonical process. Unless otherwise mentioned, we will always consider the canonical filtration $\mc{F}_t := \mc{F}_t^X := \sigma(X_s; s \leq t)$ of $(X_t)_{t \geq 0}$. \par
If $\mu \in \mc{P}(\mbb{R}^d)$ is a probability measure and $(A,\mc{D})$ a linear operator with domain $\mc{D} \subseteq \mc{B}_b(\mbb{R}^d)$, then we say that a probability measure $\mbb{P}^{\mu}$ on $\Omega=D[0,\infty)$ is a \emph{solution to the $(A,\mc{D})$-martingale problem with initial distribution $\mu$} if $\mbb{P}^{\mu}(X_0 \in \cdot) = \mu(\cdot)$ and \begin{equation*}
	M_t^u := u(X_t)-u(X_0)- \int_0^t Au(X_s) \, ds, \qquad t \geq 0,
\end{equation*}
is a $\mbb{P}^{\mu}$-martingale with respect to the canonical filtration $(\mc{F}_t)_{t \geq 0}$ for all $u \in \mc{D}$. Note that our definition entails, in particular, that $(X_t)_{t \geq 0}$ does $\mbb{P}^{\mu}$-almost surely not explode in finite time, i.\,e.\ we consider only conservative solutions to the martingale problem. We write $\Pi_{\mu}$ for the family of solutions to the $(A,\mc{D})$-martingale problem with initial distribution $\mu$. If $\mu = \delta_x$ is a Dirac distribution, then we use the shorthand $\mbb{P}^x := \mbb{P}^{\delta_x}$ and $\Pi_x := \Pi_{\delta_x}$. The $(A,\mc{D})$-martingale problem is \emph{well-posed} if for any $\mu \in \mc{P}(\mbb{R}^d)$ there exists a unique solution to the $(A,\mc{D})$-martingale problem with initial distribution $\mu$. For a comprehensive study of martingale problems see \cite[Chapter 4]{ethier}. \par
In this paper we are interested in martingale problems associated with \emph{pseudo-differential operators} (also called \emph{L\'evy-type operators}), that is, operators of the form \begin{equation}
\label{pseudo}	Af(x) := (-q(x,D)f)(x) := - \int_{\mbb{R}^d} e^{ix \cdot \xi} q(x,\xi) \hat{f}(\xi) \, d\xi, \qquad f \in \mc{D} :=C_c^{\infty}(\mbb{R}^d), \, \, x \in \mbb{R}^d,
\end{equation}
where $\hat{f}(\xi) := (2\pi)^{-d} \int_{\mbb{R}^d} e^{-ix \cdot \xi} f(x) \, dx$ denotes the Fourier transform of $f$ and \begin{equation}
	q(x,\xi) := q(x,0)- ib(x) \cdot \xi + \frac{1}{2} \xi \cdot Q(x) \xi + \int_{\mbb{R}^d \backslash \{0\}} \left( 1-e^{iy \cdot \xi} + iy \cdot \xi \I_{(0,1)}(|y|) \right) \, \nu(x,dy) \label{symbol}
\end{equation}
is the \emph{symbol} of the pseudo-differential operator $A$. For each fixed $x \in \mbb{R}^d$, $q(x,\cdot)$ is a \emph{continuous negative definite function} and $(b(x),Q(x),\nu(x,dy))$ is a \emph{L\'evy triplet}, i.\,e.\ $b(x) \in \mbb{R}^d$, $Q(x) \in \mbb{R}^{d \times d}$ is a symmetric positive semidefinite matrix and $\nu(x,dy)$ is a $\sigma$-finite measure on $(\mbb{R}^d \backslash \{0\},\mc{B}(\mbb{R}^d \backslash \{0\}))$ satisfying $\int_{y \neq 0} \min\{1,|y|^2\} \,\nu(x,dy)<\infty$. We call $(b,Q,\nu)$ the \emph{characteristics} of $q$. Using properties of the Fourier transform, it is not difficult to see that  \eqref{pseudo} is equivalent to \begin{align*}
	Af(x) &= q(x,0) f(x) + b(x) \cdot \nabla f(x) + \frac{1}{2} \tr(Q(x) \cdot \nabla^2 f(x)) \\
	&\quad + \int_{\mbb{R}^d \backslash \{0\}} \left( f(x+y)-f(x)-\nabla f(x) \cdot y \I_{(0,1)}(|y|) \right) \, \nu(x,dy); 
\end{align*}
here $\tr(Q(x) \cdot \nabla^2 f(x))$ denotes the trace of the matrix $Q(x) \cdot \nabla^2 f(x)$. Throughout this paper, we will always assume that $q(x,0)=0$ for all $x \in \mbb{R}^d$ and that $(x,\xi) \mapsto q(x,\xi)$ is Borel measurable. A symbol $q$ with characteristics $(b,Q,\nu)$ is \emph{locally bounded} if\begin{equation}
	\sup_{x \in K}  \left( |b(x)| + |Q(x)| + \int_{y \neq 0} \min\{|y|^2,1\} \, \nu(x,dy) \right) < \infty \label{bdd-coeff}
\end{equation}
for any compact set $K \subseteq \mbb{R}^d$; by \cite[Lemma 6.2]{schnurr}, $q$ is locally bounded if, and only if, for any $R>0$ there exists a finite constant $C_R>0$ such that $|q(x,\xi)| \leq c_R (1+|\xi|^2)$ for all $|x| \leq R$, $\xi \in \mbb{R}^d$. If \eqref{bdd-coeff} holds for $K=\mbb{R}^d$, then $q$ has \emph{bounded coefficients}. We say that $q$ has \emph{continuous coefficients} if $x \mapsto q(x,\xi)$ is continuous for all $\xi \in \mbb{R}^d$, see \cite[Theorem A.1]{timechange} for a characterization in terms of the characteristics $(b,Q,\nu)$. Our standard references for martingale problems associated with pseudo-differential operators is the monograph \cite{jac3}, see also \cite{hoh} and the references therein. \par
There is a close connection between Feller processes and martingale problems for pseudo-differential operators, cf.\ \cite{ltp,mp} for a detailed discussion. If the symbol $q$ is of the form \begin{equation*}
	q(x,\xi) = -ib(x) \cdot \xi + \psi(\sigma(x)^T \xi), \qquad x,\xi \in \mbb{R}^d
\end{equation*}
for the characteristic exponent $\psi$ of some L\'evy process $(L_t)_{t \geq 0}$, it is known that a solution to the $(-q(x,D),C_c^{\infty}(\mbb{R}^d))$-martingale problem gives rise to a weak solution to the L\'evy-driven SDE \begin{equation}
	dX_t = b(X_{t-}) \, dt + \sigma(X_{t-}) \, dL_t \label{sde}
\end{equation}
and vice versa, cf.\ \cite{kurtz}.

\section{Existence of solutions to martingale problems with discontinuous coefficients} \label{exi}

Let $(q(x,\cdot))_{x \in \mbb{R}^d}$ be a family of continuous negative definite functions represented by \eqref{symbol} such that $q(x,0)=0$ for all $x \in \mbb{R}^d$. If $x \mapsto q(x,\xi)$ is continuous, then there are general existence results for solutions to the $(-q(x,D),C_c^{\infty}(\mbb{R}^d))$-martingale problem. The key tool is the following statement, cf.\ \cite[Theorem 4.5.4]{ethier}.

\begin{thm} \label{exi-3}
	Let $A: \mc{D}(A) \to C_{\infty}(\mbb{R}^d)$ be a linear operator such that $\mc{D}(A) \subseteq C_{\infty}(\mbb{R}^d)$, and let $\mu \in \mc{P}(\mbb{R}^d)$ be a probability measure. If $A$ satisfies the positive maximum principle and $\mc{D}(A)$ is dense in $C_{\infty}(\mbb{R}^d)$, then there exists an $\mbb{R}^d_{\partial}$-valued solution to the $(A,\mc{D}(A))$-martingale problem with initial distribution $\mu$.
\end{thm}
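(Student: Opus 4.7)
My plan is to reduce the existence of a martingale solution to the existence of an $\mbb{R}^d_{\partial}$-valued \cadlag Markov process whose infinitesimal generator extends $A$. The first step is to reformulate $A$ as an operator on the compact space $\mbb{R}^d_{\partial}$: set $\widetilde{\mc{D}} := \mc{D}(A) \oplus \mbb{R} \cdot \mathbf{1}$, with $\mathbf{1}$ the constant function $1$ on $\mbb{R}^d_{\partial}$, and define $\widetilde{A}(f+c\mathbf{1}) := Af$ for $f \in \mc{D}(A)$ and $c \in \mbb{R}$. Since $\mc{D}(A)$ is dense in $C_{\infty}(\mbb{R}^d)$, the set $\widetilde{\mc{D}}$ is dense in $C(\mbb{R}^d_{\partial})$. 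The positive maximum principle on $C_{\infty}(\mbb{R}^d)$ lifts to a PMP for $\widetilde{A}$ on $C(\mbb{R}^d_{\partial})$: a non-negative maximum of $f+c\mathbf{1}$ attained at some $x_0 \in \mbb{R}^d$ gives $\widetilde{A}(f+c\mathbf{1})(x_0) = Af(x_0) \leq 0$, while a maximum attained at $\partial$ gives $\widetilde{A}(f+c\mathbf{1})(\partial) = 0$.

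Next I would turn $\widetilde{A}$ into the generator of a Feller semigroup on $\mbb{R}^d_{\partial}$. The PMP yields the dissipative estimate $\|\lambda g - \widetilde{A} g\|_{\infty} \geq \lambda \|g\|_{\infty}$ for all $\lambda > 0$ and $g \in \widetilde{\mc{D}}$ by the standard argument (evaluate at a point where $|g|$ attains its maximum and apply PMP to $\pm g$). Because the range condition is not assumed, I would invoke a Hahn-Banach-type extension result for dissipative operators (see e.g.\ \cite[Prop.~4.3.1 and Thm.~1.2.12]{ethier}) to obtain a closed dissipative extension $\hat{A}$ of $\widetilde{A}$ with $\overline{R(\lambda - \hat{A})} = C(\mbb{R}^d_{\partial})$ for every $\lambda > 0$. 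By the Hille-Yosida theorem, $\hat{A}$ then generates a strongly continuous contraction semigroup $(T_t)_{t \geq 0}$ on $C(\mbb{R}^d_{\partial})$. The PMP ensures that $(T_t)$ is positive, and $\hat{A}\mathbf{1} = 0$ yields $T_t \mathbf{1} = \mathbf{1}$, so $(T_t)$ is a Markov Feller semigroup on the compact Polish space $\mbb{R}^d_{\partial}$.

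The classical correspondence between Feller semigroups and \cadlag strong Markov processes on compact Polish spaces then yields an $\mbb{R}^d_{\partial}$-valued \cadlag Markov process $(X_t)_{t \geq 0}$ on $D[0,\infty)$ with initial distribution $\mu$ and transition semigroup $(T_t)$. Denote its law by $\mbb{P}^{\mu}$. For every $f \in \mc{D}(A) \subseteq \mc{D}(\hat{A})$, Dynkin's formula applied to the semigroup $(T_t)$ gives that
\begin{equation*}
	M_t^f := f(X_t)-f(X_0) - \int_0^t \hat{A} f(X_s)\, ds = f(X_t)-f(X_0) - \int_0^t A f(X_s)\, ds
\end{equation*}
is a $\mbb{P}^{\mu}$-martingale with respect to the canonical filtration, so $\mbb{P}^{\mu}$ solves the $(A,\mc{D}(A))$-martingale problem.

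The step I expect to be most delicate is the extension from $\widetilde{A}$ to $\hat{A}$: the PMP and density of $\mc{D}(A)$ yield only dissipativity, not the range condition required by Hille-Yosida, and a priori such an extension may enlarge the domain substantially. What saves the proof is that $\hat{A}$ is still an extension, so $\hat{A} f = A f$ on $\mc{D}(A)$, which is all that is needed for Dynkin's formula to translate into the martingale property for $A$. Working on the one-point compactification rather than on $C_{\infty}(\mbb{R}^d)$ directly provides the constant function $\mathbf{1}$, which affords exactly the flexibility needed to perform the Hahn-Banach extension while leaving $A$ untouched on its original domain, and also accommodates the possibility that the resulting process leaves $\mbb{R}^d$ (hitting $\partial$) in finite time.
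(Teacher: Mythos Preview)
The paper does not give its own proof of this statement; it is quoted from \cite[Theorem 4.5.4]{ethier} and used as a black box. Your outline is essentially the argument underlying that reference: pass to the compactification $\mbb{R}^d_{\partial}$, lift $A$ and the positive maximum principle to $C(\mbb{R}^d_{\partial})$, produce a Feller generator extending the lifted operator, build the associated \cadlag Markov process, and read off the martingale property from Dynkin's formula. You have also correctly isolated the extension step as the heart of the matter.

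One refinement: the existence of a closed dissipative extension $\hat{A}$ with $\overline{R(\lambda-\hat{A})}=C(\mbb{R}^d_{\partial})$ is not a purely abstract Hahn--Banach consequence valid on an arbitrary Banach space; it uses the special structure of PMP operators on $C(K)$ for compact $K$. In \cite{ethier} the corresponding step is carried out via an approximation argument (bounded operators that do generate semigroups, then a compactness/limit passage) rather than a one-shot extension. Your formulation is fine as a plan, but that extra input is what makes the extension step go through.
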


If $A$ is a pseudo-differential operator with symbol $q$ and domain $\mc{D}(A) := C_c^{\infty}(\mbb{R}^d)$, then the assumption $A: C_c^{\infty}(\mbb{R}^d) \to C_{\infty}(\mbb{R}^d)$ in Theorem~\ref{exi-3} means, in particular, that $x \mapsto q(x,\xi)$ has to be continuous, i.\,e.\ Theorem~\ref{exi-3} allows us only to derive existence results for martingale problems with continuous coefficients. Hoh \cite[Theorem 3.15]{hoh} used Theorem~\ref{exi-3} to establish the existence of solutions to the $(A,C_c^{\infty}(\mbb{R}^d))$-martingale problem under the assumption that $x \mapsto q(x,\xi)$ is continuous and $q$ has bounded coefficients, i.\,e.\ $|q(x,\xi)| \leq c(1+|\xi|^2)$, $x,\xi \in \mbb{R}^d$, for some absolute constant $c>0$. The following refinement has recently been obtained in \cite[Corollary 3.2]{mp}.

\begin{thm} \label{exi-1}
	Let $A$ be a pseudo-differential operator with symbol $q$, $q(x,0)=0$. If $q$ has continuous coefficients, is locally bounded and satisfies the linear growth condition \begin{equation}
	\lim_{|x| \to \infty} \sup_{|\xi| \leq |x|^{-1}} |q(x,\xi)| < \infty \label{exi-eq2}
	\end{equation}
	then there exists for any $\mu \in \mc{P}(\mbb{R}^d)$ a (non-explosive) solution to the $(A,C_c^{\infty}(\mbb{R}^d))$-martingale problem with initial distribution $\mu$.
\end{thm}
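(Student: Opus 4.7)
The plan is to reduce the problem to a sequence of auxiliary martingale problems with bounded continuous symbols (where Theorem~\ref{exi-3} applies) and then pass to the limit, using the growth condition \eqref{exi-eq2} through a Lyapunov estimate to rule out explosion. Concretely, for each $n \geq 1$ fix a cut-off $\chi_n \in C_c^{\infty}(\mbb{R}^d)$ with $0 \leq \chi_n \leq 1$ and $\chi_n \equiv 1$ on $B(0,n)$, and set $q_n(x,\xi) := \chi_n(x) q(x,\xi)$. Then $q_n$ is a continuous negative definite symbol with bounded coefficients, arising from the scaled characteristics $(\chi_n b, \chi_n Q, \chi_n \nu)$, and the associated operator $A_n$ coincides with $A$ on $B(0,n)$. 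Hoh's existence theorem (or directly Theorem~\ref{exi-3} applied to $A_n : C_c^{\infty}(\mbb{R}^d) \to C_{\infty}(\mbb{R}^d)$) provides a solution $\mbb{P}^{\mu}_n$ to the $(A_n, C_c^{\infty}(\mbb{R}^d))$-martingale problem with initial distribution $\mu$; a priori this solution takes values in $\mbb{R}^d_{\partial}$.

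The core estimate is a Lyapunov bound uniform in $n$. By evaluating $q(x,\xi)$ at $\xi$ of order $|x|^{-1}$ and reading off real and imaginary parts from the L\'evy--Khintchine representation, the growth condition \eqref{exi-eq2} translates into
$$|b(x)| \leq C(1+|x|), \qquad |Q(x)| \leq C(1+|x|^2), \qquad \int_{y \neq 0} \min\{|y|^2,|x|^2\}\, \nu(x,dy) \leq C(1+|x|^2),$$
plus a uniform bound on the tail $\nu(x,\{|y|>|x|\})$. Take $V(x) := \log(1+|x|^2)$, whose gradient and Hessian decay like $|\nabla V(x)| \leq 1$ and $|\nabla^2 V(x)| \leq 2/(1+|x|^2)$, and approximate $V$ by a sequence $V_k \in C_c^{\infty}(\mbb{R}^d)$ that agrees with $V$ on $B(0,k)$ with uniformly bounded first and second derivatives. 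Splitting the non-local part of $A_n V_k$ at $|y|=1$ and at $|y|=|x|$ and using the above characteristic bounds, a direct computation gives
$$|A_n V_k(x)| \leq C'\bigl(1+V(x)\bigr) \qquad \text{for all } x \in B(0,k) \text{ and all } n \geq 1.$$
Plugging this into the $\mbb{P}^{\mu}_n$-martingale $V_k(X_t)-V_k(X_0)-\int_0^t A_n V_k(X_s)\,ds$ with the stopping time $\tau_R := \inf\{t \geq 0 : |X_t|\geq R\}$, letting $k\to\infty$, and invoking Gronwall's lemma yields
$$\sup_{n \geq 1} \mbb{E}_{\mbb{P}^{\mu}_n} V(X_{t \wedge \tau_R}) \leq \Bigl(1 + \int V\, d\mu\Bigr) e^{C't},$$
so that $\sup_n \mbb{P}^{\mu}_n(\tau_R \leq t) \to 0$ as $R\to\infty$. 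In particular, each $\mbb{P}^{\mu}_n$ is $\mbb{R}^d$-valued and non-explosive, with a uniform tail bound.

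Tightness of $(\mbb{P}^{\mu}_n)_n$ on $D[0,\infty)$ now follows from Aldous's criterion: on the event $\{t < \tau_R\}$ the symbols $q_n$ with $n \geq R$ agree with $q$, which is locally bounded, so the usual one-step increment estimates for $f(X_{t+h})-f(X_t)$ with $f \in C_c^{\infty}(\mbb{R}^d)$ apply uniformly in $n$. Extracting a weakly convergent subsequence $\mbb{P}^{\mu}_{n_k} \Rightarrow \mbb{P}^{\mu}$, I identify the limit as follows: for any $f \in C_c^{\infty}(\mbb{R}^d)$ with $\supp f \subseteq B(0,m)$ and any $n_k \geq m$, one has $A_{n_k} f = Af$, so $M_t^f := f(X_t)-f(X_0)-\int_0^t Af(X_s)\,ds$ is a $\mbb{P}^{\mu}_{n_k}$-martingale with $f, Af$ bounded and continuous; the standard passage-to-the-limit argument for martingales under weak convergence (cf.\ \cite[Theorem 4.8.10]{ethier}) then shows that $M_t^f$ is a $\mbb{P}^{\mu}$-martingale, and the uniform bound on $\tau_R$ transfers to the limit, making $\mbb{P}^{\mu}$ conservative.

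The main obstacle is the Lyapunov estimate for $A_n V_k$: one must convert the \emph{asymptotic} condition \eqref{exi-eq2} on $q$ into a \emph{pointwise} bound on $A_n V_k$ that is independent of $n$. This requires carefully splitting the jump integral, using the Hessian decay of $V$ to absorb the small-jump contribution against $\int \min\{|y|^2,|x|^2\}\nu(x,dy)$ and the uniform mass bound on $\nu(x,\{|y|>|x|\})$ to control the large jumps, while exploiting $0 \leq \chi_n \leq 1$ to ensure that all constants are independent of the truncation parameter. Once this a priori estimate is available, tightness, the Markov-property-free limit identification, and non-explosion reduce to standard weak-convergence machinery for martingale problems.
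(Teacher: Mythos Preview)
The paper does not prove Theorem~\ref{exi-1} itself; it is quoted from \cite[Corollary~3.2]{mp}. Your truncation-and-limit strategy is natural and is, in spirit, the right framework. However, the core Lyapunov estimate you rely on is not valid in general under the stated hypotheses, and this is a genuine gap.

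The growth condition \eqref{exi-eq2} controls only the \emph{total mass} of the large-jump part, $\nu(x,\{|y|>|x|\})\leq C$, but gives no moment bound on that tail. Consequently the claimed inequality $|A_n V_k(x)|\leq C'(1+V(x))$ with $V(x)=\log(1+|x|^2)$ can fail. In dimension one, take $b=Q=0$ and
\[
\nu(x,dy)=\phi(|x|)\sum_{k\geq 1}2^{-k}\,\delta_{|x|e^{2^k}}(dy),
\]
where $\phi\in C^\infty$, $0\leq\phi\leq 1$, $\phi=0$ on $[0,2]$, $\phi=1$ on $[3,\infty)$. Then $q$ has continuous coefficients, is locally bounded, and $\sup_{|\xi|\leq|x|^{-1}}|q(x,\xi)|\leq 2$ for all $x$, so \eqref{exi-eq2} holds. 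Yet for $|x|>3$,
\[
AV(x)=\sum_{k\geq 1}2^{-k}\bigl(V(x+|x|e^{2^k})-V(x)\bigr)\asymp\sum_{k\geq 1}2^{-k}\cdot 2\cdot 2^{k}=+\infty,
\]
and the same blow-up appears in $A_nV_k(x)$ as $k\to\infty$, so your Gronwall step cannot be carried out. (The process here does \emph{not} explode---its total jump rate is $\leq 1$---so the theorem is true; only your detection mechanism fails.)

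The remedy, and the tool the present paper uses throughout, is the maximal inequality of Proposition~\ref{cont-9}: under \eqref{exi-eq2} together with local boundedness there is a finite constant $K$ with
\[
\mbb{P}\Bigl(\sup_{s\leq t}|X_s|\geq 2r,\ |X_0|\leq r\Bigr)\leq cKt \qquad\text{for every }r>0,
\]
valid for \emph{any} solution $\mbb{P}$ of the martingale problem. Since this bound does not vanish as $r\to\infty$, one iterates: regular conditional probabilities of a solution are again solutions (cf.\ \cite[Section~4.5]{ethier}), so the successive exit times from balls of doubling radii have increments that are, conditionally, stochastically bounded below by a fixed positive random variable, and the conditional Borel--Cantelli lemma gives non-explosion. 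This replaces your Lyapunov/Gronwall step and is uniform over the truncations $A_n$; the rest of your outline (Aldous tightness and identification of the limit via $A_{n_k}f=Af$ for large $n_k$) then goes through.
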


Let us mention that the growth condition \eqref{exi-eq2} can be formulated in terms of the characteristics $(b,Q,\nu)$ of $q$, cf.\ \cite[Lemma 3.1]{mp}. \par \medskip

For martingale problems with \emph{dis}continuous coefficients we are not aware of general statements on the existence of solutions. The publication \cite{imkeller} is concerned with such an existence result but, unfortunately, there seems to be a doubt about its proof.  For the particular case that the symbol $q$ of the pseudo-differential operator $A$ is of the form \begin{equation*}
	q(x,\xi) = -ib(x) \cdot \xi + \psi(\sigma(x)^T \xi), \qquad x,\xi \in \mbb{R}^d
\end{equation*}
for the characteristic exponent $\psi$ of a L\'evy process $(L_t)_{t \geq 0}$, it is known that solving the $(A,C_c^{\infty}(\mbb{R}^d))$ is equivalent to studying weak solutions to the SDE \begin{equation}
	dX_t = b(X_{t-}) \, dt + \sigma(X_{t-}) \, dL_t. \label{exi-eq3}
\end{equation}
There are, however, only few results on the existence of weak solutions to SDEs with discontinuous coefficients $b$, $\sigma$, and they are mostly restricted to SDEs driven by isotropic $\alpha$-stable L\'evy processes. Kurenok \cite{kur12} used a timechange method to study SDEs of the form \eqref{exi-eq3} driven by a one-dimensional isotropic $\alpha$-stable L\'evy process, $\alpha \in [1,2]$, and for Borel measurable coefficients $b$, $\sigma$. For the particular case that there is no drift part (i.\,e.\ $b:=0$) and $(L_t)_{t \geq 0}$ is a one-dimensional isotropic L\'evy process, Zanzotto \cite{zan02} obtained an Engelbert-Schmidt-type result which gives a necessary and sufficient condition for the existence of the weak solution. Moreover, a result by Kurenok \cite{kur06} states that the SDE \begin{equation*}
	dX_t = b(t,X_{t-}) \, dt + \, dL_t, \qquad X_0 \sim \delta_x
\end{equation*}
has a weak solution if $b$ is a bounded measurable function and the characteristic exponent $\psi: \mbb{R}^d \to \mbb{C}$ of the L\'evy process $(L_t)_{t \geq 0}$ satisfies \begin{equation*}
	\lim_{|\xi| \to \infty} \frac{|\xi|}{\re \psi(\xi)} = 0. 
\end{equation*}

In this section we will be derive a new existence result for martingale problems with discontinuous coefficients, cf.\ Theorem~\ref{exi-9}. This will allow us to establish a new existence result for L\'evy-driven SDEs with discontinuous coefficients, see Corollary~\ref{exi-13}. As usual we denote by $(X_t)_{t \geq 0}$ the canonical process on $\Omega := D[0,\infty)$. We start with the following, rather simple observation. 

\begin{prop} \label{exi-5}
	Let $A: C_c^{\infty}(\mbb{R}^d) \to \mc{B}_b(\mbb{R}^d)$ and $L: C_c^{\infty}(\mbb{R}^d) \to \mc{B}_b(\mbb{R}^d)$ be two linear operators such that \begin{equation*}
		Af(x)  = Lf(x) \quad \text{for Lebesgue almost all $x \in \mbb{R}^d$}
	\end{equation*}
	for all $f \in C_c^{\infty}(\mbb{R}^d)$. If $(X_t,\mc{F}_t,\mbb{P}^x; x \in \mbb{R}^d, t \geq 0)$ is a Markov process which solves the $(A,C_c^{\infty}(\mbb{R}^d))$-martingale problem and $(X_t)_{t \geq 0}$ admits a transition density $p$ with respect to Lebesgue measure, then $(X_t,\mc{F}_t,\mbb{P}^x;x \in \mbb{R}^d, t \geq 0)$ is a Markovian solution to the $(L,C_c^{\infty}(\mbb{R}^d))$-martingale problem.
\end{prop}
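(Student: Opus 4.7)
The plan is to reduce the statement to the observation that the two integrands $s \mapsto Au(X_s)$ and $s \mapsto Lu(X_s)$ agree $\mathbb{P}^x \otimes ds$-a.e., which will imply that the candidate martingale $M_t^{L,u} := u(X_t)-u(X_0)-\int_0^t Lu(X_s)\,ds$ coincides $\mathbb{P}^x$-a.s.\ with the already-known martingale $M_t^{A,u}$.

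First, I would fix $u \in C_c^\infty(\mbb{R}^d)$ and introduce the exceptional set $N := \{y \in \mbb{R}^d : Au(y) \neq Lu(y)\}$, which by hypothesis has Lebesgue measure zero. The whole point of the transition density assumption is that it transfers this Lebesgue null property to the laws of $X_s$: for every $s>0$ and every $x \in \mbb{R}^d$,
\begin{equation*}
	\mbb{P}^x(X_s \in N) = \int_N p(s,x,y)\,dy = 0.
\end{equation*}
An application of Tonelli's theorem then gives $\mbb{E}^x \int_0^t \I_N(X_s)\,ds = \int_0^t \mbb{P}^x(X_s \in N)\,ds = 0$, so that $\mbb{P}^x$-a.s.\ the set $\{s \in [0,t] : X_s \in N\}$ has Lebesgue measure zero. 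Consequently,
\begin{equation*}
	\int_0^t Au(X_s)\,ds = \int_0^t Lu(X_s)\,ds \qquad \mbb{P}^x\text{-a.s.},
\end{equation*}
so $M_t^{L,u} = M_t^{A,u}$ up to a $\mbb{P}^x$-null set, and the right-hand side is an $\mc{F}_t$-martingale by assumption.

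Since $\mbb{P}^x(X_0 = x) = 1$ by the Markov property of the family $(X_t,\mbb{P}^x)$, the initial distribution $\delta_x$ is correct; and because we have not altered the underlying process but only replaced the associated operator, the Markov (indeed strong Markov if applicable) property is retained automatically. This yields the Markovian solution to the $(L,C_c^\infty(\mbb{R}^d))$-martingale problem.

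There is no real obstacle here: the argument is a one-line consequence of the absolute continuity of the marginals of $(X_t)_{t \geq 0}$ together with Tonelli's theorem. The only point worth highlighting is that the hypothesis on the transition density is precisely what is needed; without it, even if $Au = Lu$ off a Lebesgue null set, the laws of $X_s$ could charge that null set and the proof would break down.
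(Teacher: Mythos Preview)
Your proof is correct and in spirit close to the paper's, but your execution is actually a little sharper. The paper first shows only the equality of \emph{expectations}
\[
\mbb{E}^y\!\left(\int_s^t Af(X_r)\,dr\right)=\mbb{E}^y\!\left(\int_s^t Lf(X_r)\,dr\right)
\]
via the transition density, and then invokes the Markov property to upgrade this to the martingale test
\[
\mbb{E}^x\!\left(\prod_{i=1}^m g_i(X_{t_i})\Bigl[f(X_t)-f(X_s)-\int_s^t Lf(X_r)\,dr\Bigr]\right)=0.
\]
You instead use the density plus Tonelli to obtain the stronger \emph{pathwise} equality $\int_0^t Au(X_s)\,ds=\int_0^t Lu(X_s)\,ds$ $\mbb{P}^x$-a.s., so that $M_t^{L,u}=M_t^{A,u}$ a.s.\ and the martingale property transfers directly. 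The upshot is that your argument does not need the Markov property at all at this step; the Markov structure survives simply because you have not changed the process.
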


Roughly speaking, the process $(X_t)_{t \geq 0}$ does not ``see'' Lebesgue null sets (since it has a transition density with respect to Lebesgue measure), and therefore we can modify $Af$ on a Lebesgue null set. 

\begin{proof}[Proof of Proposition~\ref{exi-5}]
	For any $y \in \mbb{R}^d$, $f \in C_c^{\infty}(\mbb{R}^d)$ and $s \leq t$ we have \begin{align*}
		\mbb{E}^y \left( \int_s^t Af(X_r) \, dr \right)
		= \int_s^t\!\!\int_{\mbb{R}^d} Af(z) p_r(y,z) \, dz \, dr 
		&= \int_s^t\!\!\int_{\mbb{R}^d} Lf(z) p_r(y,z) \, dz \, dr \\
		&= \mbb{E}^y \left( \int_s^t Lf(X_r) \, dr \right).
	\end{align*}
	Using that $(X_t)_{t \geq 0}$ is a solution to the $(A,C_c^{\infty}(\mbb{R}^d))$-martingale problem which has the Markov property we find \begin{align*}
		0
		&=\mbb{E}^x \left( \prod_{i=1}^m g_i(X_{t_i}) \left[ f(X_t)-f(X_s) - \int_s^t Af(X_r) \right] \right) \\
		&=\mbb{E}^x \left( \prod_{i=1}^m g_i(X_{t_i}) \left[ f(X_t)-f(X_s) - \int_s^t Lf(X_r) \right] \right)
	\end{align*}
	for any $g_i \in C_b(\mbb{R}^d)$ and $0 \leq t_1 \leq \ldots \leq t_m \leq s \leq t$. This shows that $\mbb{P}^x$ is a solution to the $(L,C_c^{\infty}(\mbb{R}^d))$-martingale with initial distribution $\mu = \delta_x$.
\end{proof}

Proposition~\ref{exi-5} is a useful tool to derive existence results for the particular case that the symbol $q$ is ``nice'' up to a null set.

\begin{bsp}[Isotropic stable-like process] \label{exi-7}
	Let $\alpha: \mbb{R}^d \to (0,2]$ be a H\"older continuous mapping which is bounded away from from $0$. If $\beta: \mbb{R}^d \to (0,2]$ satisfies $\alpha=\beta$ Lebesgue-almost everywhere, then there exists a Feller process which solves the martingale problem for the pseudo-differential operator with symbol $p(x,\xi) = |\xi|^{\beta(x)}$, $x,\xi \in \mbb{R}^d$.
\end{bsp}

\begin{proof}
	It is known that there exists a Feller process with symbol $q(x,\xi) = |\xi|^{\alpha(x)}$ and that the process admits a transition density, cf.\ \cite{diss} or \cite{matters}. As \begin{equation*}
		Af(x) := - \int_{\mbb{R}^d} e^{ix \cdot \xi} q(x,\xi) \hat{f}(\xi) \, d\xi = - \int_{\mbb{R}^d} e^{ix \cdot \xi} p(x,\xi) \hat{f}(\xi) \, d\xi =: Lf(x)
	\end{equation*}
	for Lebesgue almost all $x \in \mbb{R}^d$, we have $Af=Lf$ almost everywhere; applying Proposition~\ref{exi-5} finishes the proof.
\end{proof}

A possible choice for $\beta$ is, for instance, $\beta(x) = \alpha(x) \I_{\mbb{R}^d \backslash A}$ for a Lebesgue null set $A \subseteq \mbb{R}^d$. Let us remark that Example~\ref{exi-7} works in a similar fashion for other stable-like processes, for instance relativistic stable-like processes or Lamperti stable-like processes, cf.\ \cite{matters}.  \par \medskip

The main result in this section is the following existence result. Recall that $(X_t)_{t \geq 0}$ denotes the canonical process.

\begin{thm} \label{exi-9}
	Let $A_n$, $n \geq 1$, be a pseudo-differential operator with symbol $q_n$ such that $q_n(x,0)=0$. For fixed $\mu \in \mc{P}(\mbb{R}^d)$ let $\mbb{P}_n$, $n \geq 1$, be a solution to the $(A_n,C_c^{\infty}(\mbb{R}^d))$-martingale problem with initial distribution $\mu$. Assume that the following assumptions are satisfied. 
	\begin{enumerate}[label*=\upshape (C\arabic*),ref=\upshape C\arabic*] 
		\item\label{C1} (Local equiboundedness) \begin{align*}
			\forall R>0: \quad \sup_{n \geq 1} \sup_{|x| \leq R} \left( |b_n(x)| + |Q_n(x)| + \int_{y \neq 0} \min\{|y|^2,1\} \, \nu_n(x,dy) \right) &< \infty;
		\end{align*}
		here $(b_n,Q_n,\nu_n)$ denotes the characteristics of $q_n$;
		\item\label{C2} (Uniform equicontinuity at $\xi=0$) $\lim_{R \to \infty} \sup_{n \geq 1} \sup_{|y| \leq R} \sup_{|\xi| \leq R^{-1}} |q_n(y,\xi)|=0$;
		\item\label{C3} (Krylov estimate) There exist a locally finite measure $m$ on $(\mbb{R}^d,\mc{B}(\mbb{R}^d))$ and a constant $p \geq 1$ such that for any $T>0$ \begin{equation}
			\mbb{E}_{\mbb{P}_n} \left( \int_0^t u(X_s) \, ds \right) \leq c \|u\|_{L^p(m)}, \qquad u \in \mc{B}_b(\mbb{R}^d), u \geq 0, n \in \mbb{N}, t  \in [0,T] \label{exi-eq10}
		\end{equation}
		for some absolute constant $c=c(T)>0$.
	\end{enumerate}
	If $L: C_c^{\infty}(\mbb{R}^d) \to \mc{B}_b(\mbb{R}^d)$ is a linear operator such that \begin{equation}
		\inf_{g \in C_b(\mbb{R}^d)} \left( \limsup_{n \to \infty} \|A_n f-g\|_{L^p(m)} + \|Lf-g\|_{L^p(m)} \right)=0 \fa f \in C_c^{\infty}(\mbb{R}^d), \label{exi-eq11}
	\end{equation}
	then there exists a solution to the $(L,C_c^{\infty}(\mbb{R}^d))$-martingale problem with initial distribution $\mu$. 
\end{thm}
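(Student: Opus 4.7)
The strategy is a Skorohod weak-limit argument: show tightness of $(\mbb{P}_n)_{n \geq 1}$, extract a subsequential weak limit $\mbb{P}$, and identify it as a solution to the $(L,C_c^{\infty}(\mbb{R}^d))$-martingale problem. For the first step I would use \ref{C1} to control the local characteristics of $(X_t)$ under $\mbb{P}_n$ uniformly in $n$ on compact sets (giving Aldous-type modulus of continuity estimates on compacta), and \ref{C2} as a uniform-in-$n$ compact-containment / non-explosion condition; this runs parallel to the tightness/non-explosion argument underlying Theorem~\ref{exi-1} and \cite[Corollary 3.2]{mp}, except that $q_n$ is not required to be continuous in $x$. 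Extract a weakly convergent subsequence, still denoted by $\mbb{P}_n \Rightarrow \mbb{P}$; the initial-distribution condition $\mbb{P}(X_0 \in \cdot) = \mu$ is automatic because $X_0$ is a continuous functional on $D[0,\infty)$.

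Next, I would lift the Krylov estimate \ref{C3} to $\mbb{P}$. For $u \in C_b(\mbb{R}^d)$ with $u \geq 0$, the map $\omega \mapsto \int_0^t u(\omega_s)\,ds$ is bounded and Skorohod-continuous, so the Portmanteau theorem gives $\mbb{E}_{\mbb{P}}\int_0^t u(X_s)\,ds \leq c(T)\|u\|_{L^p(m)}$ for $t \leq T$. In particular, every $m$-null Borel set is visited by $(X_s)$ for zero Lebesgue time, $\mbb{P}$-a.s. To extend to arbitrary $u \in \mc{B}_b(\mbb{R}^d)$ with $u \geq 0$, pick $u_k \in C_b(\mbb{R}^d)$ with $\|u_k\|_{\infty} \leq \|u\|_{\infty}$ and $u_k \to u$ both $m$-a.e.\ and in $L^p(m)$; the Krylov estimate applied to $|u_k - u_\ell|$ makes $\bigl(\int_0^t u_k(X_s)\,ds\bigr)_k$ Cauchy in $L^1(\mbb{P})$, and the occupation-time identity just noted forces its $L^1$-limit to equal $\int_0^t u(X_s)\,ds$ almost surely.

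Finally, fix $f \in C_c^{\infty}(\mbb{R}^d)$, times $0 \leq s_1 \leq \dots \leq s_k \leq s < t$ avoiding the (at most countable) fixed discontinuities of $\mbb{P}$, and $h_1, \dots, h_k \in C_b(\mbb{R}^d)$; write $H := \prod_i h_i(X_{s_i})$ and $C := \prod_i \|h_i\|_{\infty}$. Given $\eps > 0$, use \eqref{exi-eq11} to choose $g \in C_b(\mbb{R}^d)$ with $\limsup_n \|A_n f - g\|_{L^p(m)} + \|Lf - g\|_{L^p(m)} < \eps$. The $(A_n,C_c^{\infty})$-martingale identity and \ref{C3} give
\[
\Bigl| \mbb{E}_{\mbb{P}_n} \bigl[ H \cdot \bigl( f(X_t) - f(X_s) - \textstyle\int_s^t g(X_r)\,dr \bigr) \bigr] \Bigr| \leq C\, c(t)\, \|A_n f - g\|_{L^p(m)},
\]
which is $\leq C c(t) \eps$ for all large $n$. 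The bracketed functional is bounded and Skorohod-continuous at the chosen times, so this inequality passes to $\mbb{P}$ along the weak limit. Replacing $g$ by $Lf$ via the Krylov estimate for $\mbb{P}$ (Step~2) applied to $|Lf - g|$ costs another $C c(t) \eps$; letting $\eps \to 0$ yields the martingale identity for $H$ of product form, and a monotone-class argument extends it to arbitrary bounded $\mc{F}_s$-measurable test variables. Right-continuity of paths removes the restriction on the choice of times.

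The main obstacle is the tightness/non-explosion step: since the symbols $q_n$ may be genuinely discontinuous, none of the existing existence theorems (Theorem~\ref{exi-3}, Theorem~\ref{exi-1}) applies off the shelf, and one has to extract uniform compact containment and modulus estimates directly from the semimartingale decomposition using only \ref{C1}--\ref{C2}. Once tightness is secured, the remainder is a single clean approximation scheme in which the bounded continuous bridge $g$ from \eqref{exi-eq11} reconciles $A_n f$ and $Lf$: Krylov handles the two differences uniformly, while weak convergence handles $g$ itself.
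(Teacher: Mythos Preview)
Your proposal is correct and follows essentially the same approach as the paper: tightness via \eqref{C1}--\eqref{C2} (packaged in the paper as Corollary~\ref{exi-8}, which combines the maximal inequality Proposition~\ref{cont-9} for compact containment with Aldous' criterion---no continuity of $q_n$ in $x$ is needed, so your ``main obstacle'' is already handled there), lifting the Krylov estimate to the weak limit, and the bridge-function $g$ argument to verify the martingale identity. The only minor differences are that the paper extends the Krylov estimate to measurable $u$ via Portmanteau/Fatou on open sets plus a monotone-class argument rather than your $C_b$-approximation with a Cauchy-in-$L^1$ step, and you are in fact slightly more careful than the paper in restricting $t_1,\dots,t_k,s,t$ to continuity times of $\mbb{P}$ for the Skorohod weak-limit passage.
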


We will construct the solution $\mbb{P}$ as the weak limit of (a subsequence of) $(\mbb{P}_n)_{n \in \mbb{N}}$; \eqref{C1} and \eqref{C2} give tightness of $(\mbb{P}_n)_{n \in \mbb{N}}$ whereas \eqref{C3} and \eqref{exi-eq11} are used to show that the weak limit $\mbb{P}$ is indeed a solution to the $(L,C_c^{\infty}(\mbb{R}^d))$-martingale problem. 

\begin{bem_thm} \label{exi-11} \begin{enumerate}
	\item\label{exi-11-ii} If $Lf = \lim_{n \to \infty} A_nf$ then \eqref{exi-eq11} is equivalent to \begin{equation*}
			\inf_{g \in C_b(\mbb{R}^d)} \|Lf-g\|_{L^p(m)} = 0, \qquad f \in C_c^{\infty}(\mbb{R}^d).
	\end{equation*} %dominated conv. theorem, cont-9(iii)
	This condition is automatically satisfied if $m$ is a finite measure; indeed, if $m$ is finite, then $C_b(\mbb{R}^d)$ is dense in $L^p(m)$ and $Lf \in \mc{B}_b(\mbb{R}^d) \subseteq L^p(m)$.
	\item\label{exi-iv} By \cite[Lemma 6.2]{schnurr}, the boundedness condition \eqref{C1} is equivalent to \begin{equation*}
		\forall R>0: \quad \sup_{n \geq 1} \sup_{|x| \leq R} \sup_{|\xi| \leq 1} |q_n(x,\xi)| < \infty.
	\end{equation*}
	\item\label{exi-11-i} Condition  \eqref{C3} implies, by the Radon-Nikod\'ym theorem, that the distribution $\mbb{P}_n(X_t \in \cdot)$ is absolutely continuous with respect to $m$ for Lebesgue almost every $t>0$. 
	\item\label{exi-11-iii} We will see in the proof of Theorem~\ref{exi-9} that the solution $\mbb{P}$ satisfies the Krylov estimate \begin{equation*}
		\mbb{E}_{\mbb{P}} \left( \int_0^t u(X_s) \, ds \right) \leq c \|u\|_{L^p(m)}, \qquad u \in \mc{B}_b(\mbb{R}^d), u \geq 0, n \in \mbb{N}, t \in [0,T].
	\end{equation*}
	In particular, $\mbb{P}(X_t \in \cdot)$ is absolutely continuous with respect to $m$ for Lebesgue almost all $t>0$.
	\item\label{exi-11-v} Inequality \eqref{exi-eq10} is automatically satisfied for functions $u \geq 0$ such that $\|u\|_{L^p(m)}=\infty$. The local finiteness of $m$ ensures that $L^p(m)$ is sufficiently rich (in particular, $C_c(\mbb{R}^d) \subseteq L^p(m)$).
\end{enumerate} \end{bem_thm}

For the proof of Theorem~\ref{exi-9} we need some auxiliary statements.

\begin{lem} \label{cont-12}
	Let $A$ be a pseudo-differential operator with symbol $q$, $q(x,0)=0$, and characteristics $(b,Q,\nu)$. If $f \in C_c^{\infty}(\mbb{R}^d)$ is such that the support of $f$ is contained in the closed ball $\overline{B(0,R)}$ for some $R>0$, then \begin{align}  \label{cont-eq11} \begin{aligned}
			\|Af\|_{\infty} &\leq 2\|f\|_{(2)} \sup_{|x| \leq R} \left( |b(x)|+|Q(x)| + \int_{y \neq 0} \min\{|y|^2,1\} \, \nu(x,dy) \right) \\ &\quad + \|f\|_{\infty} \sup_{|x|>R} \nu(x,\overline{B(-x,R)}). \end{aligned}
		\end{align}
	Moreover, there exist absolute constants $C_1,C_2>0$ (not depending on $R$ and $f$) such that 
		 \begin{align} \label{cont-eq14} \begin{aligned}
			\|Af\|_{\infty} &\leq 2\|f\|_{(2)} \sup_{|x| \leq R} \left( |b(x)|+|Q(x)| + \int_{y \neq 0} \min\{|y|^2,1\} \, \nu(x,dy) \right) \\ &\quad + C_2 \|f\|_{\infty} \sup_{|x|>R} \sup_{|\xi| \leq |x|^{-1}} |\re q(y,\xi)| \end{aligned}
		\end{align}	
		and 
			 \begin{align}
					\|Af\|_{\infty} &\leq C_1 \|f\|_{(2)}\sup_{|x| \leq R} \sup_{|\xi| \leq 1} |q(y,\xi)|  + C_2 \|f\|_{\infty} \sup_{|x|>R} \sup_{|\xi| \leq |x|^{-1}} |\re q(y,\xi)|. \label{cont-eq16}
				\end{align}	
\end{lem}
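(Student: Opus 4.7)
All three estimates rest on the same decomposition of $\|Af\|_{\infty} = \sup_{x \in \mbb{R}^d} |Af(x)|$ according to whether $|x| \leq R$, where $f$ may be active, or $|x| > R$, where $f(x) = \nabla f(x) = \nabla^2 f(x) = 0$.

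For $|x| \leq R$ I would plug $f$ into the L\'evy-type representation of $Af$ and estimate each summand directly: the drift and diffusion parts contribute at most $|b(x)|\,\|\nabla f\|_{\infty}$ and $\tfrac{1}{2}|Q(x)|\,\|\nabla^2 f\|_{\infty}$, respectively, while the non-local piece is split at $|y|=1$, applying Taylor's theorem $|f(x+y)-f(x)-\nabla f(x)\cdot y| \leq \tfrac{1}{2}\|\nabla^2 f\|_{\infty}|y|^2$ on the small-jump region and the crude bound $|f(x+y)-f(x)|\leq 2\|f\|_{\infty}$ on the complement. These add up to $|Af(x)| \leq 2\|f\|_{(2)}(|b(x)|+|Q(x)|+\int\min\{|y|^2,1\}\,\nu(x,dy))$. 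For $|x|>R$, the vanishing of $f$ and of its first derivative at $x$ collapses $Af(x)$ to $\int f(x+y)\,\nu(x,dy)$, whose integrand is supported on $\{y:x+y\in \supp f\}\subseteq \overline{B(-x,R)}$; hence the trivial bound $|f(x+y)|\leq \|f\|_{\infty}$ yields $|Af(x)|\leq \|f\|_{\infty}\nu(x,\overline{B(-x,R)})$. Passing to the relevant suprema produces \eqref{cont-eq11}.

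To deduce \eqref{cont-eq14} I would upgrade the bound on $\nu(x,\overline{B(-x,R)})$ via the classical tail estimate $\nu(x,\{|y|\geq \rho\}) \leq c_d \sup_{|\xi|\leq 1/\rho}|\re q(x,\xi)|$ (see \cite[Lemma~6.3]{schnurr} and its precursors in Schilling's papers), which is applicable because any $y \in \overline{B(-x,R)}$ with $|x|>R$ satisfies $|y|\geq |x|-R$. Choosing $\rho$ of order $|x|$ and invoking the subadditivity of $\sqrt{|\psi|}$ for continuous negative definite functions $\psi$ (which gives estimates of the form $\sup_{|\xi|\leq \lambda r}|\psi| \leq (1+\lambda)^2 \sup_{|\xi|\leq r}|\psi|$) converts the supremum into one over $|\xi|\leq |x|^{-1}$ with an absolute dimensional constant $C_2$. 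Finally, \eqref{cont-eq16} follows from \eqref{cont-eq14} by replacing $|b(x)|+|Q(x)|+\int \min\{|y|^2,1\}\,\nu(x,dy)$ in the first summand with $C\sup_{|\xi|\leq 1}|q(x,\xi)|$ using \cite[Lemma~6.2]{schnurr}.

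The main technical obstacle is the careful conversion from the natural scale $(|x|-R)^{-1}$ provided by the tail bound to the sharper $|x|^{-1}$ appearing in the statement, especially in the transitional regime $R<|x|<2R$ where the two scales are not comparable; cleanly handling this requires either a separate treatment of the boundary range by means of the universal estimate from the first part of the proof, or a careful exploitation of the c.n.d.f.\ scaling. The remaining work amounts to routine Taylor bookkeeping and standard L\'evy-measure estimates.
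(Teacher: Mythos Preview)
Your proposal is correct and matches the paper's proof: the same $|x|\leq R$ versus $|x|>R$ split, Taylor estimates on the inside, the support reduction to $\nu(x,\overline{B(-x,R)})$ on the outside, and \cite[Lemma~6.2]{schnurr} for \eqref{cont-eq16}. The one cosmetic difference is that the paper obtains the bound $\nu(x,\overline{B(-x,R)}) \leq C_2 \sup_{|\xi|\leq |x|^{-1}}|\re q(x,\xi)|$ by direct citation to \cite[Proof of Theorem~1.27]{diss} and \cite[Proof of Lemma~3.26]{ltp}, thereby sidestepping the $(|x|-R)^{-1}$ versus $|x|^{-1}$ scale-conversion issue you correctly flag as the main technical point.
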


\begin{proof}
	 Fix $f \in C_c^{\infty}(\mbb{R}^d)$ and $R>0$ such that $\spt f \subseteq \overline{B(0,R)}$. If $|x| \leq R$ then by Taylor's formula \begin{equation*}
			|A f(x)| \leq 2 \|f\|_{(2)} \sup_{|x| \leq R} \left( |b(x)| +|Q(x)| + \int_{y \neq 0} \min\{1,|y|^2\} \, \nu(x,dy) \right).
			\end{equation*}
			On the other hand, we have for $|x|>R$ \begin{equation*}
			|A f(x)| = \left| \int_{y \neq 0} f(x+y) \, \nu(x,dy) \right| \leq \|f\|_{\infty} \nu(x,\overline{B(-x,R)}),
			\end{equation*}
			and combining the estimates gives \eqref{cont-eq11}.  Since \begin{equation*}
					\nu(x,\overline{B(-x,R)}) \leq C_2 \sup_{|\xi| \leq |x|^{-1}} |\re q(x,\xi)|
				\end{equation*}
			for some absolute constant $C_2>0$, see e.\,g.\ \cite[Proof of Theorem 1.27]{diss} or \cite[Proof of Lemma 3.26]{ltp}, we get \eqref{cont-eq14}. Finally, \eqref{cont-eq16} follows from \cite[Lemma 6.2]{schnurr} and \eqref{cont-eq14}.
\end{proof}

The following maximal inequality is a crucial tool for the proof of Theorem~\ref{exi-9} but also for the proof of the Markovian selection theorem in Section~\ref{cont}.

\begin{prop} \label{cont-9}
	Let $A: C_c^{\infty}(\mbb{R}^d) \to \mc{B}_b(\mbb{R}^d)$ be a pseudo-differential operator with symbol $q$, $q(x,0)=0$, and let $\mbb{P}^{\mu}$ be a solution to the $(A,C_c^{\infty}(\mbb{R}^d))$-martingale problem with initial distribution $\mu$. Then there exists an absolute constant $c>0$ (not depending on $\mu$ or $\mbb{P}^{\mu}$) such that \begin{equation*}
		\mbb{P}^{\mu} \left( \sup_{s \leq t} |X_s| \geq R, |X_0| \leq r \right)
		\leq c t \sup_{|y| \leq R}\sup_{|\xi| \leq R^{-1}} |q(y,\xi)|
	\end{equation*}
	for any $t>0$ and $R \geq 2r>0$.
\end{prop}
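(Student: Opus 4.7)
The plan is to combine an optional stopping argument with an $R$-rescaled cutoff and a Fourier-analytic bound for $Ag$. Introduce the hitting time $\tau := \inf\{s \geq 0 : |X_s| \geq R\}$; since $\{|x| \geq R\}$ is closed and $X$ is \cadlag, $\tau$ is an $(\mc{F}_t)$-stopping time and $\{\sup_{s \leq t}|X_s| \geq R\} = \{\tau \leq t\}$, so it suffices to bound $\mbb{P}^\mu(|X_0| \leq r,\,\tau \leq t)$.

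Fix once and for all a cutoff $\chi \in C_c^\infty(\mbb{R}^d)$ with $0 \leq \chi \leq 1$, $\chi \equiv 1$ on $\overline{B(0,1/2)}$ and $\supp \chi \subseteq B(0,1)$, and set $g(x) := \chi(x/R) \in C_c^\infty(\mbb{R}^d)$. The hypothesis $R \geq 2r$ forces $g(X_0) = 1$ on $\{|X_0| \leq r\}$, and $\supp g \subseteq B(0,R)$ combined with \cadlag sample paths gives $g(X_\tau) = 0$ on $\{\tau < \infty\}$. Applying optional stopping to the bounded $\mbb{P}^\mu$-martingale $M^g_t = g(X_t)-g(X_0)-\int_0^t Ag(X_s)\,ds$ at the bounded stopping time $t \wedge \tau$, and multiplying the resulting identity by the $\mc{F}_0$-measurable indicator $\I_{\{|X_0|\leq r\}}$, gives
\[
\mbb{E}^\mu\bigl[(g(X_{t\wedge\tau})-g(X_0))\I_{\{|X_0|\leq r\}}\bigr] = \mbb{E}^\mu\!\left[\I_{\{|X_0|\leq r\}}\int_0^{t \wedge \tau} Ag(X_s)\,ds\right].
\]
On $\{|X_0|\leq r\}$ the bracket on the left equals $-1$ on $\{\tau \leq t\}$ and is non-positive elsewhere, whereas for $s < \tau$ one has $|X_s| < R$. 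Rearranging yields
\[
\mbb{P}^\mu(|X_0|\leq r,\, \tau \leq t) \;\leq\; t \sup_{|x|<R}|Ag(x)|.
\]

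It remains to bound $\sup_{|x|<R}|Ag(x)|$ by the right-hand side of the proposition. Using $\hat g(\xi) = R^d\hat\chi(R\xi)$ in the pseudo-differential representation \eqref{pseudo} of $A$ and the substitution $\eta = R\xi$ gives
\[
Ag(x) = -\int_{\mbb{R}^d} e^{i(x/R)\cdot \eta}\, q(x,\eta/R)\, \hat\chi(\eta)\,d\eta.
\]
For each fixed $x$, $\xi \mapsto q(x,\xi/R)$ is again a continuous negative definite function, so the classical quadratic bound for cndf's (cf.\ \cite[Lemma~6.2]{schnurr}) applied to this rescaled symbol produces an absolute constant $C$ with
\[
|q(x,\eta/R)| \;\leq\; C(1+|\eta|^2)\sup_{|\xi|\leq R^{-1}}|q(x,\xi)|.
\]
Because $\hat\chi$ is Schwartz, $\int(1+|\eta|^2)|\hat\chi(\eta)|\,d\eta$ is a finite constant depending only on $\chi$, and taking the supremum over $|x|<R$ therefore delivers
\[
\sup_{|x|<R}|Ag(x)| \;\leq\; c\sup_{|y|\leq R}\sup_{|\xi|\leq R^{-1}}|q(y,\xi)|.
\]
Combining the two displayed bounds completes the argument.

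No step presents a genuine obstacle; the essential design choice is the scaling $g(x) = \chi(x/R)$, dictated by the hypothesis $R \geq 2r$, which via the substitution $\eta = R\xi$ converts the generic cndf bound involving $\sup_{|\xi|\leq 1}$ into the desired bound involving $\sup_{|\xi|\leq R^{-1}}$. The restriction $|y|\leq R$ in the final supremum reflects the fact that the optional-stopping identity only probes values of $Ag$ on $B(0,R)$.
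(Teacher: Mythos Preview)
Your argument is correct and follows essentially the same route as the paper's proof: a rescaled cutoff $\chi(\cdot/R)$, optional stopping at $t\wedge\tau_R$, the Fourier representation with the substitution $\eta=R\xi$, and the subadditivity bound for continuous negative definite functions from \cite{schnurr} to extract the factor $\sup_{|\xi|\leq R^{-1}}|q(y,\xi)|$. Your handling of the sign via multiplication by the $\mc{F}_0$-measurable indicator $\I_{\{|X_0|\leq r\}}$ is in fact slightly cleaner than the paper's sketch, which appeals to \cite[Proof of Theorem~5.1]{ltp} for the analogous step.
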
 

For Feller processes $(X_t)_{t \geq 0}$ the maximal inequality goes back to Schilling \cite{rs} (see also \cite[Theorem 5.1]{ltp}), and has been refined in \cite{matters}. A localized maximal inequality was derived in \cite{ihke}, and \cite{timechange} gives a maximal inequality for solutions to martingale problems.

\begin{proof}[Proof of Proposition~\ref{cont-9}]
	The reasoning is similar to the proof of \cite[Theorem 5.1]{ltp} (see also \cite[Lemma 3.1]{timechange}) but for the readers' convenience we sketch the idea of the proof.  Fix $0 < r \leq 2R < \infty$ and $u \in C_c^{\infty}(\mbb{R}^d)$, $0 \leq u \leq 1$ such that $u|_{B(0,1/2)}=1$ and $u|_{B(0,1)^c}=0$. If we set $u_R := u(\cdot/R)$ and \begin{equation*}
			\tau_R := \inf\{t>0; X_t \notin B(0,R)\}
		\end{equation*}
		then it follows from the optional stopping theorem that \begin{equation*}
			M_t := u_R(X_{t \wedge \tau_R})-u_R(X_0)- \int_0^{t \wedge \tau_R} Au_R(X_s) \, ds
		\end{equation*}
		is a $\mbb{P}^{\mu}$-martingale; in particular, \begin{equation}
			\mbb{E}_{\mbb{P}^{\mu}}\big(u_R(X_0)-u_R(X_{t \wedge \tau_R}) \big)= - \mbb{E}_{\mbb{P}^{\mu}} \left( \int_0^{t \wedge \tau_R} Au_R(X_s) \, ds \right). \label{cont-eq23}
		\end{equation}
		For any $\omega \in \{\tau_R \leq t\} \cap \{|X_0| \leq r\}$ we have $|X_{t \wedge \tau_R}(\omega)| \geq R$ and $|X_0(\omega)| \leq r \leq R/2$; thus \begin{equation*}
			u_R(X_0(\omega))-u_R(X_{t \wedge \tau_R}(\omega)) = 1
		\end{equation*}
		which implies \begin{equation*}
			\mbb{P}^{\mu} \left(\sup_{s \leq t} |X_s| \geq R, |X_0| \leq r \right) \leq \mbb{E}_{\mbb{P}^{\mu}} \big(u_R(X_0)- u_R(X_{t \wedge \tau_R}) \big).
		\end{equation*}
		Using \eqref{cont-eq23} and exactly the same reasoning as in \cite[Proof of Theorem 5.1]{ltp}, we get \begin{align*}
			\mbb{P}^{\mu} \left(\sup_{s \leq t} |X_s| \geq R, |X_0| \leq r \right)
			&\leq - \mbb{E}_{\mbb{P}^{\mu}} \left( \int_0^{t \wedge \tau_R} Au_R(X_s) \, ds \right) \\
			&= \mbb{E}_{\mbb{P}^{\mu}} \left[\int_0^{t \wedge \tau_R} \left( \I_{|y|  < R} \int_{\mbb{R}^d} e^{iy \cdot \xi} q(y,\xi) \hat{u}_R(\xi) \, d\xi \right) \bigg|_{y=X_{s-}} \, ds \right] \\
			&\leq ct \sup_{|y| \leq R} \sup_{|\xi| \leq R^{-1}} |q(y,\xi)|
		\end{align*}
		where $c := 2 \int_{\mbb{R}^d} (1+|\eta|^2) |\hat{u}(\eta)| \, d\eta$.
\end{proof}

From Proposition~\ref{cont-9} we can deduce the following statement on the tightness of a sequence of solutions to martingale problems.

\begin{kor}  \label{exi-8}
	For  $k \geq 1$ let $(q_k(x,\cdot))_{x \in \mbb{R}^d}$ be a family of continuous negative definite mappings with characteristics $(b_k,Q_k,\nu_k)$ such that $q_k(x,0)=0$. Let $\mbb{P}_k$ be a solution to the $(-q_k(x,D),C_c^{\infty}(\mbb{R}^d))$-martingale problem with initial distribution $\mu$. If \begin{align} \label{cont-eq22}
			\forall R>0: \quad \sup_{k \geq 1} \sup_{|x| \leq R} \left( |b_k(x)| + |Q_k(x)| + \int_{y \neq 0} \min\{|y|^2,1\} \,\nu_k(x,dy)\right) &< \infty \end{align}
		and \begin{align}
			\lim_{R \to \infty} \sup_{k \geq 1} \sup_{|y| \leq R} \sup_{|\xi| \leq R^{-1}} |q_k(y,\xi)| = 0, \label{cont-eq25}
		\end{align}
		then $(\mbb{P}_k)_{k \geq 1}$ is tight.
\end{kor}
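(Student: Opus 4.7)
The plan is to establish tightness of $(\mbb{P}_k)_{k \geq 1}$ in $\mc{P}(D[0,\infty))$ by verifying the two standard ingredients that reduce tightness on Skorohod space to scalar problems: a compact containment condition on the canonical process $(X_t)_{t \geq 0}$, and tightness of each real-valued process $(f(X_t))_{t \geq 0}$ under $\mbb{P}_k$ for $f$ in the point-separating family $C_c^{\infty}(\mbb{R}^d)$. Combining both via \cite[Theorem 3.9.1]{ethier} yields the claim.

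For compact containment, fix $T>0$ and $\eta>0$ and choose $r>0$ with $\mu(B(0,r)) \geq 1- \eta/2$. For any $R \geq 2r$, Proposition~\ref{cont-9} applied to each $\mbb{P}_k$ gives
\begin{equation*}
	\mbb{P}_k\left( \sup_{s \leq T} |X_s| \geq R \right) \leq \frac{\eta}{2} + c T \sup_{|y| \leq R} \sup_{|\xi| \leq R^{-1}} |q_k(y,\xi)|.
\end{equation*}
Hypothesis \eqref{cont-eq25} allows one to choose $R$ so large that the second term is bounded by $\eta/2$ \emph{uniformly in $k$}, which is the required compact containment on $[0,T]$.

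For the second ingredient, fix $f \in C_c^{\infty}(\mbb{R}^d)$ with support in some $\overline{B(0,R_f)}$. Applying estimate \eqref{cont-eq16} of Lemma~\ref{cont-12} to $A_k$ and combining with \eqref{cont-eq22} and \eqref{cont-eq25} yields $C_f := \sup_k \|A_k f\|_\infty < \infty$; the same reasoning applied to $f^2 \in C_c^{\infty}(\mbb{R}^d)$ gives $\sup_k \|A_k(f^2)\|_\infty < \infty$. Since $\mbb{P}_k$ solves the martingale problem, both
\begin{equation*}
	M_t^f := f(X_t) - f(X_0) - \int_0^t A_k f(X_s)\, ds \quad \text{and} \quad f(X_t)^2 - f(X_0)^2 - \int_0^t A_k(f^2)(X_s)\, ds
\end{equation*}
are $\mbb{P}_k$-martingales; a standard carr\'e-du-champ computation (equivalently, It\^o's formula) shows that the predictable quadratic variation of $M^f$ equals $\int_0^\cdot \bigl( A_k(f^2) - 2 f A_k f \bigr)(X_s)\, ds$, whose integrand is uniformly bounded in $k$. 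Hence for any bounded stopping time $\tau$ and $h>0$,
\begin{equation*}
	\mbb{E}_{\mbb{P}_k}\bigl[ (M_{\tau+h}^f - M_\tau^f)^2 \bigr] \leq h \, D_f \qquad \text{and} \qquad \left| \int_\tau^{\tau+h} A_k f(X_s)\, ds \right| \leq h \, C_f
\end{equation*}
with $C_f, D_f$ independent of $k$. Via Chebyshev, this verifies Aldous's criterion uniformly in $k$ for $(f(X_t))_{t \geq 0}$.

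The main technical work lies in establishing the \emph{uniform in $k$} bound on $\|A_k f\|_\infty$ and $\|A_k(f^2)\|_\infty$: hypothesis \eqref{cont-eq22} controls the local and near-field parts of $A_k g$ inside the support of $g$, while \eqref{cont-eq25} controls the non-local jump contribution evaluated from outside the support. Lemma~\ref{cont-12} is exactly the tool combining these two regimes into a pointwise sup bound, and this is what renders the Aldous estimates available simultaneously for all $k$.
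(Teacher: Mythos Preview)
Your proof is correct and follows essentially the same route as the paper: compact containment via Proposition~\ref{cont-9} together with \eqref{cont-eq25}, and a uniform bound $\sup_k\|A_k f\|_\infty<\infty$ via Lemma~\ref{cont-12} combined with \eqref{cont-eq22} and \eqref{cont-eq25}, feeding into Aldous's criterion. The only difference is packaging: the paper invokes a ready-made Aldous-type result for martingale problems \cite[Theorem~4.1.16]{jac3} directly from the bound on $\|A_k f\|_\infty$, whereas you unpack the argument by also bounding $\|A_k(f^2)\|_\infty$, computing the predictable quadratic variation via the carr\'e-du-champ, and then appealing to \cite[Theorem~3.9.1]{ethier}.
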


\begin{proof}
	For fixed $\eps>0$ there exists $r>0$ such that $\mu(B(0,r)^c) \leq \eps$. Applying Proposition~\ref{cont-9} we find \begin{align*}
		\mbb{P}_k \left( \sup_{t \leq T} |X_t| \geq R \right)
		&\leq \eps + \mbb{P}_k \left( \sup_{t \leq T} |X_t| \geq R, |X_0| \leq r \right)
		\leq \eps + cT \sup_{|y| \leq R} \sup_{|\xi| \leq R^{-1}} |q_k(y,\xi)|
	\end{align*}
	for some absolute constant $c>0$. By \eqref{cont-eq25} this implies that the compact containment condition \begin{equation*}
			\lim_{R \to \infty} \sup_{k \geq 1} \mbb{P}_k \left( \sup_{t \leq T} |X_t| \geq R \right)=0
		\end{equation*}
		holds for any $T>0$. Moreover, it is not difficult to see that \eqref{cont-eq25} gives \begin{equation*}
			\sup_{k \geq 1} \sup_{|x|>R} \sup_{|\xi| \leq |x|^{-1}} |q_k(x,\xi)| < \infty,
		\end{equation*}
		and therefore we find from Lemma~\ref{cont-12} that $\sup_{k \geq 1} \|-q_k(x,D) f\|_{\infty} < \infty$. Now the assertion follows from Aldous tightness condition, cf.\ \cite[Theorem 4.1.16]{jac3}.
	\end{proof}

We are now ready to prove Theorem~\ref{exi-9}.

\begin{proof}[Proof of Theorem~\ref{exi-9}]
	It follows from \eqref{C1},\eqref{C2} and Corollary~\ref{exi-8} that the sequence $(\mbb{P}_n)_{n \geq 1}$ is tight, and therefore the weak limit $\mbb{P} = \lim_{k \to \infty} \mbb{P}_{n_k}$ exists for a suitable subsequence. It remains to prove that $\mbb{P}$ is a solution to the $(L,C_c^{\infty}(\mbb{R}^d))$-martingale problem. We claim that $\mbb{P}$ satisfies the Krylov estimate \begin{equation}
		\mbb{E}_{\mbb{P}} \left( \int_0^t u(X_s) \, ds \right) \leq c \|u\|_{L^p(m)} \fa u \in \mc{B}_b(\mbb{R}^d), u \geq 0, t \in [0,T]. \label{exi-eq15}
	\end{equation}
	Indeed: If $u=\I_A$ for some open set $A \subseteq \mbb{R}^d$, then this is a direct consequence of the Portmanteau theorem, Fatous lemma and \eqref{C3}; for general $u \geq 0$ the Krylov estimate then follows from a straight-forward application of the monotone class theorem. \par 
	In order to show that $\mbb{P}$ is a solution to the $(L,C_c^{\infty}(\mbb{R}^d))$-martingale problem, it suffices to prove that \begin{equation}
	\Delta := \mbb{E} \left[ \prod_{i=1}^N g_i(X_{t_i}) \left( f(X_t)-f(X_s)- \int_s^t Lf(X_r) \, dr \right) \right]=0 \tag{$\star$} \label{eq-star}
	\end{equation}
	for any $0 \leq t_1 \leq \ldots \leq t_N \leq s \leq t$, $f \in C_c^{\infty}(\mbb{R}^d)$ and $g_i \in C_b(\mbb{R}^d)$, $0 \leq g_i \leq 1$. Fix $\eps>0$. By \eqref{exi-eq11}, we can choose $g \in C_b(\mbb{R}^d)$ such that \begin{equation}
		\limsup_{k \to \infty} \|A_{n_k} f-g\|_{L^p(m)} + \|Lf-g\|_{L^p(m)} \leq \eps. \label{exi-eq19}
	\end{equation}
	Writing $Lf = (Lf-g)+g$ in \eqref{eq-star} we get $\Delta = \Delta_1+\Delta_2$ where \begin{align*}
		\Delta_1 
		&:= \mbb{E}_{\mbb{P}} \left[ \prod_{i=1}^N g_i(X_{t_i}) \left( f(X_t)-f(X_s)- \int_s^t g(X_r) \, dr \right) \right] \\
		\Delta_2 &:= \mbb{E}_{\mbb{P}} \left[ \prod_{i=1}^N g_i(X_{t_i}) \left( \int_0^t (Lf-g)(X_r) \, dr \right) \right].
	\end{align*}
	We estimate the terms separately. It follows from \eqref{exi-eq15} and \eqref{exi-eq19} that \begin{equation*}
		|\Delta_2| \leq c \|Lf-g\|_{L^p(m)}  \prod_{i=1}^n \|g_i\|_{\infty} \leq c \eps.
	\end{equation*}
	Since $g$ is continuous, the weak convergence of $\mbb{P}_{n_k}$ to $\mbb{P}$ gives \begin{align*}
		\Delta_1
		&= \lim_{k \to \infty} \mbb{E}_{\mbb{P}_{n_k}} \left[ \prod_{i=1}^N g_i(X_{t_i}) \left( f(X_t)-f(X_s) - \int_s^t g(X_r) \, dr \right) \right].
	\end{align*}
	Using that $\mbb{P}_{n_k}$ solves the $(A_{n_k},C_c^{\infty}(\mbb{R}^d))$-martingale problem we obtain \begin{equation*}
		\Delta_1 = \lim_{k \to \infty} \mbb{E}_{\mbb{P}_{n_k}} \left[ \prod_{i=1}^N g_i(X_{t_i}) \int_s^t (g-A_{n_k}f)(X_r) \, dr \right].
	\end{equation*}
	Thus, by \eqref{C3} and \eqref{exi-eq19}, \begin{equation*}
		|\Delta_1| \leq c \limsup_{k \to \infty} \|g-A_{n_k} f\|_{L^p(m)} \leq c \eps. \qedhere
	\end{equation*}
\end{proof}

Let us illustrate Theorem~\ref{exi-9} with some examples. We obtain the following existence result for solutions to SDEs with not necessarily continuous coefficients.

\begin{kor} \label{exi-13}
	Let $(L_t)_{t \geq 0}$ be a one-dimensional L\'evy process with characteristic exponent $\psi$ satisfying the following assumptions. 
		\begin{enumerate}[label*=\upshape (L\arabic*),ref=\upshape L\arabic*] 
			\item\label{L1} $\psi$ has a holomorphic extension $\Psi$ to the domain \begin{equation*}
				U := U(\vartheta) := \{z \in \mbb{C} \backslash \{0\}; (\arg z) \mod \pi \in (-\vartheta,\vartheta)\}
			\end{equation*}
			for some $\vartheta \in (0,\pi/2)$; here $\arg z \in (-\pi,\pi]$ denotes the argument of $z \in \mbb{C}$.
			\item\label{L2} There exist constants $\alpha, \beta \in (0,2]$ and $c_1,c_2>0$ such that \begin{equation*}
				\re \Psi(z) \geq c_1 |\re z|^{\beta}, \qquad z \in U, |z| \gg 1,
			\end{equation*}
			and \begin{equation*}
				|\Psi(z)| \leq c_2 |z|^{\alpha} \I_{\{|z| \leq 1\}} + c_2 |z|^{\beta} \I_{\{|z|>1\}}, \qquad z \in U.
			\end{equation*}
			\item\label{L3} $|\Psi'(z)| \leq c_2 |z|^{\beta-1}$ for all $|z| \gg 1$, $z \in U$.
		\end{enumerate}
		Let $b: \mbb{R} \to \mbb{R}$ and $\sigma: \mbb{R} \to (0,\infty)$ be bounded measurable functions. If \begin{equation}
			\beta>1 \quad \text{or} \quad b=0 \label{exi-eq29}
		\end{equation}
		and \begin{equation*}
			 \inf_{x \in \mbb{R}} \sigma(x)>0,
		\end{equation*}
		then there exists for any $\mu \in \mc{P}(\mbb{R}^d)$ a weak solution to the L\'evy-driven SDE \begin{equation}
			dX_t = b(X_{t-}) \, dt + \sigma(X_{t-}) \, dL_t, \qquad X_0 \sim \mu \label{exi-eq31}
		\end{equation}
		For Lebesgue-almost every $t>0$ the distribution $\mbb{P}^{\mu}(X_t \in \cdot)$ is absolutely continuous with respect to Lebesgue measure. 
	\end{kor}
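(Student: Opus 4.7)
The plan is to apply Theorem~\ref{exi-9} to a sequence of approximations with smooth coefficients.  By the equivalence cited after~\eqref{sde}, it suffices to construct a solution $\mathbb{P}^{\mu}$ to the $(L,C_c^{\infty}(\mathbb{R}))$-martingale problem, where $L$ is the pseudo-differential operator with symbol
\[
q(x,\xi) = -ib(x)\xi + \psi(\sigma(x)\xi);
\]
the absolute-continuity statement will then follow from Remark~\ref{exi-11}(\ref{exi-11-iii}).  I would mollify the coefficients: let $b_n := b * \rho_n$ and $\sigma_n := \sigma * \rho_n$ for a standard mollifier $\rho_n$.  Then $b_n, \sigma_n \in C_b^{\infty}(\mathbb{R})$, $b_n \to b$ and $\sigma_n \to \sigma$ Lebesgue-a.e., the sup-norms are preserved, and $\inf_x \sigma_n(x) \geq \inf_x \sigma(x) > 0$.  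Setting $q_n(x,\xi) := -ib_n(x)\xi + \psi(\sigma_n(x)\xi)$ and $A_n := -q_n(x,D)$, each $q_n$ has continuous, locally bounded coefficients satisfying~\eqref{exi-eq2}, so Theorem~\ref{exi-1} yields a solution $\mathbb{P}_n$ to the $(A_n,C_c^{\infty}(\mathbb{R}))$-martingale problem with initial distribution $\mu$.

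Next I verify the hypotheses of Theorem~\ref{exi-9} with $m$ equal to Lebesgue measure on $\mathbb{R}$.  Uniform boundedness of $b_n$ and $\sigma_n$ together with the fact that the Lévy measure associated to $\xi \mapsto \psi(\sigma_n(x)\xi)$ is the push-forward of the Lévy measure of $(L_t)_{t\geq 0}$ under $y\mapsto \sigma_n(x)y$ delivers~\eqref{C1}.  For~\eqref{C2}, the bound $|q_n(y,\xi)| \leq \|b\|_{\infty}|\xi| + |\psi(\sigma_n(y)\xi)|$ combined with $\psi(0)=0$, continuity of $\psi$, and uniform boundedness of $\sigma_n$ gives the vanishing as $R\to\infty$.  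The decisive condition is the Krylov estimate~\eqref{C3}: this is precisely where the assumptions \eqref{L1}--\eqref{L3} and the dichotomy~\eqref{exi-eq29} enter.  Under these hypotheses the heat-kernel bounds proved in \cite{matters} apply to each $q_n$ with constants depending only on $\|b_n\|_{\infty}, \|\sigma_n\|_{\infty}, (\inf\sigma_n)^{-1}, \alpha$ and $\beta$ — quantities that are mollification-invariant.  Consequently $\mathbb{P}_n(X_t \in \cdot)$ admits a density $p_n(t,x,\cdot)$ with a uniform bound $\sup_n p_n(t,x,y) \leq \kappa(t)\varrho(x-y)$ for some $\varrho$ in the Lebesgue space $L^{p'}(\mathbb{R})$ conjugate to the chosen $p$, and Hölder's inequality yields~\eqref{C3}.

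Finally, to check~\eqref{exi-eq11} with $L = -q(x,D)$, the a.e.\ convergence $b_n \to b$, $\sigma_n \to \sigma$ together with dominated convergence give $A_n f \to Lf$ pointwise a.e.; the bounds in Lemma~\ref{cont-12}, applied uniformly in $n$, combined with the tail decay of the jump integral (which transfers to $Lf$) produce an integrable majorant, so that the convergence in fact holds in $L^p(m)$.  A standard mollification of $Lf$ then supplies a $g \in C_b(\mathbb{R})$ making both $\|Lf - g\|_{L^p(m)}$ and $\limsup_n \|A_n f - g\|_{L^p(m)}$ arbitrarily small.  Theorem~\ref{exi-9} now produces the desired solution $\mathbb{P}^{\mu}$, which translates via \cite{kurtz} to a weak solution of~\eqref{exi-eq31}, and Remark~\ref{exi-11}(\ref{exi-11-iii}) gives the claimed absolute continuity.  \textbf{The main obstacle} is the uniform Krylov estimate~\eqref{C3}: establishing it requires a careful tracking of the dependence of the heat-kernel bounds in \cite{matters} on the coefficients $b_n,\sigma_n$ and verification that the relevant constants are controlled by the mollification-invariant quantities $\|b\|_{\infty}, \|\sigma\|_{\infty}, \inf\sigma, \alpha$ and $\beta$ — precisely the place where the conditions \eqref{L1}--\eqref{L3} and \eqref{exi-eq29} are used.
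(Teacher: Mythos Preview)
Your overall strategy---mollify $b,\sigma$, apply Theorem~\ref{exi-9}, translate back via \cite{kurtz}---is exactly the paper's.  The substantive gap is in your treatment of \eqref{C3}.  You assert that the heat-kernel constants from \cite{matters} depend only on $\|b_n\|_{\infty}$, $\|\sigma_n\|_{\infty}$, $(\inf\sigma_n)^{-1}$, $\alpha$, $\beta$.  That is not what \cite{matters} gives: the parametrix construction produces a constant that also depends on the \emph{H\"older norms} $\|b_n\|_{\varrho}$, $\|\sigma_n\|_{\varrho}$ (see the appendix of the paper, where the constant is written as a continuous function of $\|f\|_{\varrho(f)}$ and $\|g\|_{\varrho(g)}$).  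Under naive mollification the Lipschitz constants of $b_n,\sigma_n$ blow up like $n$, so you cannot simply fix a H\"older exponent and claim uniformity.  The paper closes this gap with Lemma~\ref{app-1}: for each Lipschitz $f_n$ one can choose a \emph{small} exponent $\alpha_n$ so that $\|f_n\|_{\alpha_n}\leq 4\|f_n\|_{\infty}\leq 4\|b\|_{\infty}$, and the heat-kernel constant, being a continuous function of the H\"older norm (not of the exponent alone), stays bounded along the sequence.  Your mollified $b_n,\sigma_n$ are Lipschitz, so the same trick applies---but it is a genuine extra step, not a consequence of sup-norm control.

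A secondary point: the paper does not take $m$ to be Lebesgue measure but the \emph{finite} measure $m(dy)=\bigl(\int Q(x-y)\,\mu(dx)\bigr)\,dy$, obtained by integrating the heat-kernel bound in $t$ first.  With finite $m$ one gets $Lf\in\mc{B}_b(\mbb{R})\subseteq L^1(m)$ and $C_b(\mbb{R})$ dense in $L^1(m)$ for free, so \eqref{exi-eq11} is immediate.  Your choice $m=\text{Lebesgue}$ forces you to prove that $Lf$ has an $L^p$ tail (i.e.\ that $x\mapsto\nu(x,\overline{B(-x,R)})$ decays fast enough), and your factorized bound $p_n(t,x,y)\leq\kappa(t)\varrho(x-y)$ does not hold as stated---the kernel estimate in \cite{matters} is not of product form.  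The paper's route (integrate in $t$ first, get $\int_0^T p_s(x,y)\,ds\leq C\,Q(x-y)$ with $Q\in L^1$, then define $m$ from $Q$) sidesteps both issues.
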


Corollary~\ref{exi-13} applies, for instance, if $(L_t)_{t \geq 0}$ is isotropic stable, relativistic stable, Lamperti stable or a truncated L\'evy process; see \cite[Table 5.2]{matters} for further examples of L\'evy processes satisfying \eqref{L1}-\eqref{L3}. Corollary~\ref{exi-13} generalizes, in particular, \cite[Theorem 4.1]{kur08}  which is restricted to isotropic stable driving L\'evy processes. Let us remark that \eqref{exi-eq29} means that the jump part dominates the drift part.

\begin{proof}[Proof of Corollary~\ref{exi-13}]
	Let $(L_t)_{t \geq 0}$ be a L\'evy process satisfying \eqref{L1}-\eqref{L3}. We split the proof in two parts; in the first part we will derive a Krylov estimate for SDEs with H\"{o}lder continuous coefficients, and in the second part we will approximate the coefficients $b,\sigma$ by H\"{o}lder continuous functions in order to apply Theorem~\ref{exi-9}. \par
	\textbf{Step 1:} Let $f,g$ be bounded H\"{o}lder continuous functions such that $\inf_x g(x)>0$. In \cite{matters} (see also \cite{diss}) it was shown that there exists a Feller process $(X_t,\mc{F}_t,\mbb{P}^x; x \in \mbb{R}^d, t \geq 0)$ which is the unique weak solution to the SDE \begin{equation}
		dX_t = f(X_{t-}) \, dt + g(X_{t-}) \, dL_t. \label{exi-eq335}
	\end{equation}
	The Feller process $(X_t)_{t \geq 0}$ has a continuous transition probability $p_t(x,y)$. Using the heat kernel estimates from \cite{matters} we find that there exists a continuous function $C$ such that \begin{equation}
		\int_0^t p_s(x,y) \, ds \leq C(T,\|f\|_{\varrho(f)}, \|g\|_{\varrho(f)}, 1/\inf_x g(x)) Q(x-y), \qquad x,y \in \mbb{R}, \, \, t \in (0,T] \label{exi-eq34}
	\end{equation}
	where $\varrho(f)$ and $\varrho(g)$ denote the H\"{o}lder exponent of $f$ and $g$, respectively, and \begin{equation*}
		Q(z) := |z|^{-1-\alpha \wedge \beta} \I_{|z| \geq 1} + (|z|^{-1+\beta} + |\log|z||) \I_{0<|z| \leq 1} + \I_{|z|=0},
	\end{equation*}
	see the appendix for details. Since the transition probability $p$ is continuous, it is not difficult to see that $x \mapsto \mbb{P}^x(A)$ is measurable for any $A \in \mc{F}_{\infty}$, and therefore $\mbb{P}^{\mu} := \int \mbb{P}^x \, \mu(dx)$ defines a probability measure; it is a weak solution to \eqref{exi-eq335} with initial distribution $\mu \in \mc{P}(\mbb{R}^d)$, and \begin{equation*}
		\mbb{P}^{\mu}(X_s \in B) 
		= \int_{\mbb{R}^d} \mbb{P}^x(X_s \in B) \, \mu(dx)
		= \int_{\mbb{R}^d} \int_B p_s(x,y) \, dy \, \mu(dx)
	\end{equation*}
	for any $B \in \mc{B}(\mbb{R}^d)$, $s>0$. Consequently, we obtain from Fubini's theorem and \eqref{exi-eq34}
	 \begin{align}
		 \int_0^t \mbb{E}_{\mbb{P}^{\mu}} u(X_s) \, ds
		&= \int_0^t \int_{\mbb{R}^d} \int_{\mbb{R}^d} u(y) p_s(x,y) \, dy \, \mu(dx) \, ds  \notag \\
		&\leq C(T,\|f\|_{\varrho(f)}, \|g\|_{\varrho(f)}, 1/\inf_x g(x)) \int u(y) \, \left(\int_{\mbb{R}^d} Q(x-y) \, \mu(dx) \right) \, dy \label{exi-eq35}
	\end{align}
	for any function $u \geq 0$, $u \in \mc{B}_b(\mbb{R}^d)$, i.\,e.\ a Krylov estimate holds for $p=1$ and the measure \begin{equation*}
		m(dy) := \left(\int_{\mbb{R}^d} Q(x-y) \, \mu(dx) \right) \, dy.
	\end{equation*}
	Note that $m$ is a finite measure since, by Tonelli's theorem and the invariance of Lebesgue measure under translations, \begin{equation*}
		m(\mbb{R}^d) = \int_{\mbb{R}^d} \int_{\mbb{R}^d} Q(z) \, dz \, \mu(dx) = \int_{\mbb{R}^d} Q(z) \, dz < \infty.
	\end{equation*}
	\textbf{Step 2:} Let $b$ and $\sigma$ be as in Corollary~\ref{exi-13}. By Lemma~\ref{app-1}, we can choose sequences $(f_n)_{n \in \mbb{N}}$, $(g_n)_{n \in \mbb{N}} \subseteq C(\mbb{R})$ and $(\alpha_n)_{n \in \mbb{N}}$, $(\beta_n)_{n \in \mbb{N}} \subseteq (0,1]$ such that \begin{align} \label{exi-eq33} \begin{aligned}
		\sup_{n \in \mbb{N}} \|f_n\|_{\alpha_n} + \sup_{n \in \mbb{N}} \|g_n\|_{\beta_n} &< \infty \\
		\inf_{x \in \mbb{R}} \sigma(x) \leq g_n(x) \leq \|\sigma\|_{\infty} \qquad \|f_n\|_{\infty} &\leq \|b\|_{\infty}.
	\end{aligned} \end{align}
	and \begin{equation*}
		f_n(x) \xrightarrow[]{n \to \infty} b(x) \qquad g_n(x) \xrightarrow[]{n \to \infty} \sigma(x)
	\end{equation*}
	Lebesgue almost everywhere. If we denote by $A_n$ the pseudo-differential operator with symbol $q_n(x,\xi) := - if_n(x) \xi + \psi(g_n(x)\xi)$, then Step 1 shows that there exists for each $n \in \mbb{N}$ a solution $\mbb{P}_n$ to the $(A_n,C_c^{\infty}(\mbb{R}^d))$ martingale problem with initial distribution $\mu$ which satisfies the Krylov estimate \begin{equation*}
		\int_0^t \mbb{E}_{\mbb{P}_n} u(X_s) \, ds \leq C(T,\|f_n\|_{\alpha_n}, \|g_n\|_{\beta_n}, 1/\inf_x g_n(x)) \|u\|_{L^1(m)}
	\end{equation*}
	for $C$ and $m$ defined in Step 1. Because of \eqref{exi-eq33} and the continuity of $C$ we can choose a constant $K=K(T)>0$ such that \begin{equation*}
		\int_0^t \mbb{E}_{\mbb{P}_n} u(X_s) \, ds \leq K  \|u\|_{L^1(m)} \fa n \in \mbb{N}, u \in \mc{B}_b(\mbb{R}^d), u \geq 0, t \in [0,T].
	\end{equation*}
	This shows that \eqref{C3} in Theorem~\ref{exi-9} holds for $p=1$ and the finite measure $m$.  Moreover, it can be easily verified that \eqref{exi-eq33} gives \eqref{C1}, \eqref{C2}. If we denote by $L$ the pseudo-differential operator with symbol $q(x,\xi) := -i b(x) \xi  + \psi(\sigma(x) \xi)$, then $q_n(x,\xi) \to q(x,\xi)$ for Lebesgue-almost all $x \in \mbb{R}$, and so \begin{equation*}
		\limsup_{n \to \infty} \|A_n f - g\|_{L^1(m)} = \|Lf-g\|_{L^1(m)} \fa f  \in C_c^{\infty}(\mbb{R}), g \in C_b(\mbb{R}).
	\end{equation*}
	Since $m$ is a finite measure, we know that $C_b(\mbb{R})$ is dense in $L^1(m)$, and as $Lf \in \mc{B}_b(\mbb{R}) \subseteq L^1(m)$ this implies \begin{equation*}
		\inf_{g \in C_b(\mbb{R})} \|Lf-g\|_{L^1(m)}=0.
	\end{equation*}
	Applying Theorem~\ref{exi-9} we find that there exists a solution to the $(L,C_c^{\infty}(\mbb{R}))$-martingale problem with initial distribution $\mu$. It is known that the solution is a weak solution to \eqref{exi-eq31}, see \cite{kurtz}. The absolute continuity of the distribution follows from Remark~\ref{exi-11}\eqref{exi-11-i}.
\end{proof}

Using the heat kernel estimates in \cite[Section 5.3]{matters} and an approximation procedure as in the proof of Corollary~\ref{exi-13} we can use Theorem~\ref{exi-9} to derive results on mixed processes and stable-like processes.

\begin{bsp}[Mixed L\'evy processes] \label{exi-17} 
	Let $\psi_1,\psi_2: \mbb{R} \to \mbb{R}$ be two continuous negative definite functions satisfying \eqref{L1}-\eqref{L3} from Corollary~\ref{exi-13}. For two measurable bounded mappings $\varphi_1,\varphi_2: \mbb{R} \to (0,\infty)$  we denote by $A$ the pseudo-differential operator with symbol \begin{equation}
		q(x,\xi) := \varphi_1(\xi) \psi_1(\xi) + \varphi_2(x) \psi_2(\xi),  \qquad x, \xi \in \mbb{R}. \label{exi-eq37}
	\end{equation}
	If \begin{equation*}
		\inf_{x \in \mbb{R}} (\varphi_1(x)+\varphi_2(x))>0
	\end{equation*}
	then there exists for any $\mu \in \mc{P}(\mbb{R}^d)$ a solution to the $(A,C_c^{\infty}(\mbb{R}))$-martingale problem with initial distribution $\mu$.
\end{bsp}

Example~\ref{exi-17} applies, for instance, if $\psi_1(\xi) = |\xi|^{\gamma}$ (isotropic stable) and $\psi_2(\xi) = \sqrt{|\xi|^{2}+m^2}^{\varrho/2} - m^{\varrho}$ (relativistic stable) for some $\gamma,\varrho \in (0,2]$ and $m>0$; we refer to \cite[Table 5.2]{matters} for further examples of continuous negative definite functions satisfying \eqref{L1}-\eqref{L3}. We would like to remark that Example~\ref{exi-17} can be extended to higher dimensions; for $d>1$ we have to replace \eqref{L1} by the assumption that $\psi_i(\xi) = \Psi_i(|\xi|)$, $\xi \in \mbb{R}^d$, for a function $\Psi_i$ which is holomorphic on $U$ (defined in \eqref{L1}) and which satisfies the growth conditions \eqref{L2},\eqref{L3}. Let us mention that the existence of (Feller) processes with a decomposable symbol of the form \eqref{exi-eq37} has been studied in \cite{hoh94,kol04} (for smooth $\varphi_i$) and in \cite[Theorem 5.5]{timechange} (for continuous $\varphi_i$). 

\begin{bsp}[Stable-like processes] \label{exi-19}
	Let $I = [\alpha_0,\alpha_1] \subseteq (0,2)$, $I \neq \emptyset$, and $J \subseteq \mbb{R}^n$ be an open set. Let $f: I \times J \to (0,\infty)$ be a bounded function such that \begin{enumerate}
		\item $\beta \mapsto f(\alpha,\beta)$ is differentiable for each $\alpha \in I$ and $\sup_{(\alpha,\beta) \in I \times J} |\partial_{\beta_j} f(\alpha,\beta)|< \infty$ for all $j \in \{1,\ldots,n\}$,
		\item $f_0 := \inf_{(\alpha,\beta) \in I \times J} f(\alpha,\beta)>0$.
	\end{enumerate}
	For a Borel measurable function $\varphi: \mbb{R}^d \to J$ denote by $A$ the pseudo-differential operator with symbol \begin{equation}
		q(x,\xi) := \int_I |\xi|^{\alpha} f(\alpha,\varphi(x)) \, d\alpha, \qquad x,\xi \in \mbb{R}^d. \label{exi-eq39}
	\end{equation}
	Then there exists for any $\mu \in \mc{P}(\mbb{R}^d)$ a solution to the $(A,C_c^{\infty}(\mbb{R}^d))$-martingale problem with initial distribution $\mu$.
\end{bsp}

\begin{bem}
It follows from the well-known identity \begin{equation*}
	|\xi|^{\alpha} = c_{\alpha,d} \int_{\mbb{R}^d} (1-\cos(y \cdot \xi)) \frac{1}{|y|^{d+\alpha}} \, dy, \qquad \xi \in \mbb{R}^d, \, \, \alpha \in (0,2)
\end{equation*}
that we can write the symbol \eqref{exi-eq39} in the form \begin{equation*}
	q(x,\xi) = \int_{\mbb{R}^d} (1-\cos(y \cdot \xi)) \nu(x,dy)
\end{equation*}
where \begin{equation*}
	\nu(x,dy) := c_{\alpha,d} \int_I f(\alpha,\varphi(x)) \frac{1}{|y|^{d+\alpha}} \, d\alpha \, dy.
\end{equation*} \end{bem}

\section{Markovian solutions to martingale problems for L\'evy-type operators} \label{cont}

Throughout this section, we denote by $(X_t)_{t \geq 0}$ the canonical process on $\Omega = D[0,\infty)$, and $(q(x,\cdot))_{x \in \mbb{R}^d}$ is a family of continuous negative definite functions such that $q(x,0)=0$. \par
The aim of this section is to establish a condition which ensures the existence of a Markovian solution to the $(-q(x,D),C_c^{\infty}(\mbb{R}^d))$-martingale problem. It is well-known, see e.\,g.\ \cite{ethier}, that the Markov property holds if the martingale problem is well-posed. It is, however, in general hard to verify the well-posedness of a martingale-problem. Our main result in this section, Theorem~\ref{cont-1}, states that a Markovian selection exists if the symbol $q$ satisfies a certain continuity condition at $\xi=0$.

\begin{thm}[Markovian selection theorem] \label{cont-1} 
	Let $A$ be a pseudo-differential operator with symbol $q$, $q(x,0)=0$, such that for any  $\mu \in \mc{P}(\mbb{R}^d)$ there exists a solution to the $(A,C_c^{\infty}(\mbb{R}^d))$-martingale problem with initial distribution $\mu$. If $q$ is locally bounded and satisfies \begin{equation}
		\lim_{R \to \infty} \sup_{|y| \leq R} \sup_{|\xi| \leq R^{-1}} |q(y,\xi)| = 0, \label{cont-eq3}
	\end{equation}
	then there exists a strongly Markovian solution $(X_t,\mc{F}_t,\mbb{P}^x; x \in \mbb{R}^d, t \geq 0)$ to the $(A,C_c^{\infty}(\mbb{R}^d))$-martingale problem, i.\,e.\ there exists a family of probability measures $(\mbb{P}^x)_{x \in \mbb{R}^d}$ on $D[0,\infty)$ such that \begin{enumerate}[label={\upshape(\roman*)}]
			\item\label{cont-1-i}  $(X_t,\mc{F}_t,\mbb{P}^x, x \in \mbb{R}^d, t \geq 0)$ is a conservative strong Markov process,
			\item\label{cont-1-ii} $x \mapsto \mbb{P}^x$ is measurable,
			\item\label{cont-1-iii} For any $\mu \in \mc{P}(\mbb{R}^d)$ the probability measure \begin{equation*}
				\mbb{P}^{\mu} := \int_{\mbb{R}^d} \mbb{P}^x \, \mu(dx)
			\end{equation*}
			is a solution to the $(A,C_c^{\infty}(\mbb{R}^d))$-martingale problem with initial distribution $\mu$. 
	\end{enumerate}
	Moreover, the following statement holds true:\begin{enumerate} \setcounter{enumi}{3}
		\item\label{cont-1-iv} For any fixed $f \in C_{\infty}(\mbb{R}^d)$, $f \geq 0$, and $\lambda>0$, the family $(\mbb{P}^x)_{x \in \mbb{R}^d}$ can be chosen in such a way that \begin{equation}
	\mbb{E}_{\mbb{P}^x} \left( \int_{(0,\infty)} e^{-\lambda t} f(X_t) \, dt \right) = \sup_{\mbb{P} \in \Pi_x} \mbb{E}_{\mbb{P}} \left( \int_{(0,\infty)} e^{-\lambda t} f(X_t) \, dt \right), \qquad x \in \mbb{R}^d, \label{cont-eq4}
	\end{equation}
	where $\Pi_x$ is the family of all probability measures $\mbb{P}$ solving the $(A,C_c^{\infty}(\mbb{R}^d))$-martingale problem with initial distribution $\delta_x$. \end{enumerate}
\end{thm}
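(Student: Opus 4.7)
The plan is to reduce the statement to the abstract Markovian selection theorem of Ethier--Kurtz (\cite[Theorem~4.5.19]{ethier}). That framework takes as input a family $\{\Pi_\mu\}_{\mu \in \mc{P}(\mbb{R}^d)}$ of solution sets to an abstract martingale problem and produces a measurable selection $x \mapsto \mbb{P}^x \in \Pi_x$ satisfying the resolvent optimality \eqref{cont-eq4} and the (strong) Markov property, provided three structural hypotheses hold: (a) $\Pi_\mu \neq \emptyset$ for every $\mu$, (b) $\Pi_\mu$ is convex and closed under the natural conditioning/pasting operation at $\mc{F}_t$-stopping times, and (c) a compact containment condition holds uniformly over $\mbb{P} \in \Pi_x$, locally uniformly in $x$.

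First I would note that (a) is exactly the standing hypothesis of the theorem. Convexity in (b) is immediate from the linearity of the martingale property. The conditioning half of (b)---that for $\mbb{P} \in \Pi_\mu$ and an $\mc{F}_\tau$-measurable stopping time $\tau$ the regular conditional law $\mbb{P}(\theta_\tau^{-1}(\cdot)\mid\mc{F}_\tau)(\omega)$ lies in $\Pi_{\delta_{X_\tau(\omega)}}$ for $\mbb{P}$-a.e.\ $\omega$, and the pasting half, that gluing such conditional solutions back gives another element of $\Pi_\mu$---follows from the standard regular-conditional-probability argument on the Polish space $D[0,\infty)$. This argument uses only that $Af$ is bounded Borel (ensured by Lemma~\ref{cont-12} and the local boundedness of $q$) and that the Skorohod shift $\theta_\tau$ is jointly measurable; no continuity of the symbol is required.

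For (c) I would invoke the maximal inequality of Proposition~\ref{cont-9}: for any $\mbb{P} \in \Pi_x$ and $R \geq 2|x|$,
\[
\mbb{P}\Bigl(\sup_{s \leq T}|X_s| \geq R\Bigr) \leq cT \sup_{|y| \leq R}\sup_{|\xi| \leq R^{-1}}|q(y,\xi)|,
\]
and the growth condition \eqref{cont-eq3} then forces the right-hand side to tend to $0$ as $R \to \infty$, uniformly in $\mbb{P} \in \Pi_x$ and locally uniformly in $x$. This simultaneously gives the compact containment condition and the conservativeness of every selected $\mbb{P}^x$. For the measurability ingredient needed by the abstract theorem, I would pick a countable $\|\cdot\|_{(2)}$-dense set $\{f_n\} \subset C_c^\infty(\mbb{R}^d)$, express $\Pi_\mu$ as the intersection of the corresponding countably many martingale conditions on $D[0,\infty)$, and thereby realize $\mu \mapsto \Pi_\mu$ as a Borel map with closed values.

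With these ingredients the Ethier--Kurtz theorem produces a measurable family $(\mbb{P}^x)_{x \in \mbb{R}^d}$ satisfying \ref{cont-1-ii}, \ref{cont-1-iii}, and the resolvent optimality \ref{cont-1-iv}, together with the Markov property; the upgrade from Markov to strong Markov is then automatic for martingale-problem solutions, because the optional sampling identity combined with the pasting half of (b) and the measurability of $x \mapsto \mbb{P}^x$ yields $\mbb{E}_{\mbb{P}^\mu}(F \circ \theta_\tau\mid \mc{F}_\tau) = \mbb{E}_{\mbb{P}^{X_\tau}}F$ for any bounded $\mc{F}_\infty$-measurable $F$ and finite stopping time $\tau$. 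I expect the main obstacle to be verifying the conditioning/pasting property in (b) cleanly in the present generality, since the symbol $q$ is only assumed Borel measurable; the technical point is to show that the exceptional $\mbb{P}$-null set on which the regular conditional probabilities fail to solve the shifted martingale problem can be chosen independently of $f \in C_c^\infty(\mbb{R}^d)$, which is handled by restricting attention to the countable dense family $\{f_n\}$ described above.
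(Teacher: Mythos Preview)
Your proposal is correct and follows essentially the same route as the paper: verify that $Af \in \mc{B}_b(\mbb{R}^d)$ via Lemma~\ref{cont-12}, establish the compact containment condition from Proposition~\ref{cont-9} and \eqref{cont-eq3}, and then invoke the Ethier--Kurtz selection machinery (the paper packages this as Theorem~\ref{pre-5}). The only difference is cosmetic: the paper cites \cite[Theorem~4.5.19 and Lemma~4.5.11(b)]{ethier} as a black box and notes that the continuity assumption on $Af$ in the latter is not actually used, whereas you unpack the conditioning/pasting and measurability verifications explicitly---both of which are indeed handled inside that reference.
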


If $q$ has continuous coefficients, then the assumption on the existence of a solution is automatically satisfied (cf.\ Corollary~\ref{cont-2}). For symbols $q$ with discontinuous coefficients we refer to Section~\ref{exi} for sufficient conditions ensuring the existence. \par \medskip

We will see in Section~\ref{vis} that the representation \eqref{cont-eq4} is useful in order to study properties of the function \begin{equation*}
	u(x) := \sup_{\mbb{P} \in \Pi_x} \mbb{E}_{\mbb{P}} \left( \int_{(0,\infty)} e^{-\lambda t} f(X_t) \, dt \right)
\end{equation*}
which can be understood as the resolvent with respect to the sublinear expectation $\mbb{E}_{\mbb{Q}^x} := \sup_{\mbb{P} \in \Pi_x} \mbb{E}_{\mbb{P}}$. 

\begin{kor}[Markovian selection for symbols with continuous coefficients] \label{cont-2}
	Let $A$ be a pseudo-differential operator with symbol $q$, $q(x,0)=0$. If $q$ is locally bounded, has continuous coefficients and \begin{equation*}
		\lim_{R \to \infty} \sup_{|y| \leq R} \sup_{|\xi| \leq R^{-1}} |q(y,\xi)| = 0, 
	\end{equation*}
	then the $(A,C_c^{\infty}(\mbb{R}^d))$-martingale problem admits a strongly Markovian solution $(X_t,\mc{F}_t,\mbb{P}^x, x \in \mbb{R}^d, t \geq 0)$  satisfying \ref{cont-1}.\eqref{cont-1-i}-\eqref{cont-1-iv}.
\end{kor}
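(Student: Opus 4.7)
The proof of Corollary~\ref{cont-2} is essentially immediate from combining two results already established in the paper. My plan is to reduce it to Theorem~\ref{cont-1} by verifying its existence hypothesis via Theorem~\ref{exi-1}.

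First I would observe that the growth assumption
\[
\lim_{R \to \infty} \sup_{|y| \leq R} \sup_{|\xi| \leq R^{-1}} |q(y,\xi)| = 0
\]
in particular implies, by restricting the outer supremum to $|y|=|x|$ and taking $|x|=R$, the linear growth condition
\[
\lim_{|x| \to \infty} \sup_{|\xi| \leq |x|^{-1}} |q(x,\xi)| = 0 < \infty
\]
required in \eqref{exi-eq2} of Theorem~\ref{exi-1}. Together with the hypotheses that $q$ has continuous coefficients, is locally bounded, and satisfies $q(x,0)=0$ (which are \emph{exactly} the remaining hypotheses of Theorem~\ref{exi-1}), we conclude that for every initial distribution $\mu \in \mathcal{P}(\mathbb{R}^d)$ there exists a (non-explosive) solution to the $(A,C_c^\infty(\mathbb{R}^d))$-martingale problem with initial distribution $\mu$. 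In other words, $\Pi_\mu \neq \emptyset$ for all $\mu$.

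Next I would invoke Theorem~\ref{cont-1}. Its hypotheses are: existence of solutions for every initial distribution (just established), local boundedness of $q$ (assumed), and the uniform continuity condition at $\xi = 0$, namely \eqref{cont-eq3}. The latter is literally the hypothesis of Corollary~\ref{cont-2}, so all conditions of Theorem~\ref{cont-1} are in place. Applying the theorem yields a family $(\mathbb{P}^x)_{x \in \mathbb{R}^d}$ on $D[0,\infty)$ satisfying assertions \eqref{cont-1-i}--\eqref{cont-1-iv} of Theorem~\ref{cont-1}, which is precisely the conclusion claimed in the corollary.

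There is no real obstacle; the only step to be careful about is verifying that the hypothesis of Corollary~\ref{cont-2} is strictly stronger than the growth condition \eqref{exi-eq2} of Theorem~\ref{exi-1}, which is just the elementary monotonicity argument above. The whole proof can be written in a few lines, essentially: \emph{By Theorem~\ref{exi-1}, the hypotheses imply $\Pi_\mu \neq \emptyset$ for every $\mu \in \mathcal{P}(\mathbb{R}^d)$; Theorem~\ref{cont-1} then yields the desired Markovian selection satisfying \eqref{cont-1-i}--\eqref{cont-1-iv}.} \qed
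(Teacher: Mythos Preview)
Your proof is correct and follows exactly the paper's approach: the paper's proof is the single line ``The assertion is a direct consequence of Theorem~\ref{exi-1} and Theorem~\ref{cont-1}.'' Your only addition is the explicit (and correct) observation that the hypothesis of the corollary implies the linear growth condition \eqref{exi-eq2}, which the paper leaves implicit.
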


We will first prove Theorem~\ref{cont-1} and Corollary~\ref{cont-2}, and then we will present some examples illustrating both results. The following result is compiled from Ethier \& Kurtz \cite{ethier}; it is the key tool for the proof of Theorem~\ref{cont-1}.

\begin{thm} \label{pre-5}
	Let $A: C_c^{\infty}(\mbb{R}^d) \to \mc{B}_b(\mbb{R}^d)$ be a linear operator, and denote by $\Pi_{\mu}$ the family of solutions to the $(A,C_c^{\infty}(\mbb{R}^d))$-martingale problem with initial distribution $\mu$. If $\Pi_{\mu} \neq \emptyset$ for any initial distribution $\mu$ and if the compact containment condition \begin{equation}
		\forall r>0, \epsilon>0, t>0 \, \, \exists R>0 \, \, \forall \mbb{P} \in \bigcup_{\mu} \Pi_{\mu}: \quad \mbb{P} \left(\sup_{s \leq t} |X_s|>R, |X_0| \leq r \right) \leq \epsilon \label{eq-cpt}
	\end{equation}
	holds, then there exist $\mbb{P}^x \in \Pi_x := \Pi_{\delta_x}$, $x \in \mbb{R}^d$, such that $(X_t,\mc{F}_t, \mbb{P}^x; x \in \mbb{R}^d, t \geq 0)$ is a strong Markov process and $x \mapsto \mbb{P}^x$ is measurable. For any fixed $f \in C_b(\mbb{R}^d)$, $f \geq 0$, and $\lambda>0$ the Markovian selection $(\mbb{P}^x)_{x \in \mbb{R}^d}$ can be chosen in such a way that \begin{equation}
		\mbb{E}_{\mbb{P}^x} \left( \int_{(0,\infty)} e^{-\lambda t} f(X_t) \, dt \right) = \sup_{\mbb{P} \in \Pi_x} \mbb{E}_{\mbb{P}} \left( \int_{(0,\infty)} e^{-\lambda t} f(X_t) \, dt \right) \fa x \in \mbb{R}^d. \label{pre-eq11}
	\end{equation}
\end{thm}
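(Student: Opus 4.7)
The plan is to invoke the abstract Markovian selection theorem of Krylov \cite{krylov}, in the refined form due to Ethier \& Kurtz \cite[Section 4.5]{ethier}. That theorem guarantees the existence of a measurable Markovian selection $(\mbb{P}^x)_{x \in \mbb{R}^d}$ from the family $\{\Pi_\mu\}$ once one has verified: (a) each $\Pi_\mu$ is convex and closed for the weak topology on $\mc{P}(D[0,\infty))$; (b) the set-valued map $\mu \mapsto \Pi_\mu$ is Borel-measurable; (c) a restriction/pasting property at stopping times, namely that for any $\mbb{P} \in \Pi_\mu$ and any $\mc{F}_t$-stopping time $\tau$, the regular conditional probabilities $\mbb{P}(\,\cdot \mid \mc{F}_\tau)(\omega)$ shifted by $\tau(\omega)$ lie in $\Pi_{\delta_{X_{\tau}(\omega)}}$ for $\mbb{P}$-a.e.\ $\omega$; together with the compact containment condition \eqref{eq-cpt} to provide relative compactness.

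I would verify (a) using the fact that the martingale property is equivalent to the family of identities
\begin{equation*}
	\mbb{E}_{\mbb{P}}\left[ \prod_{i=1}^N g_i(X_{t_i}) \left( f(X_t) - f(X_s) - \int_s^t Af(X_r)\,dr \right) \right] = 0,
\end{equation*}
with $g_i \in C_b(\mbb{R}^d)$, $f \in C_c^{\infty}(\mbb{R}^d)$, and $0 \leq t_1 \leq \cdots \leq t_N \leq s \leq t$. Convexity is immediate, and closedness under weak limits follows by choosing the time points outside the at-most-countable set of fixed discontinuities of the limit measure, using that $Af \in \mc{B}_b(\mbb{R}^d)$ plus a routine approximation of the integrand $\omega \mapsto \int_s^t Af(\omega(r))\,dr$ by functionals continuous on the Skorokhod space. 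Property (b) follows from the same description applied to countable collections of $(f, g_i, t_i, s, t)$ running through a separating family. Property (c) is the classical strong Markov restriction for martingale problems, obtained from regular conditional probabilities on the Polish space $D[0,\infty)$. Tightness of $\bigcup_\mu \Pi_\mu$ (for $\mu$ concentrated on compact sets) follows from the compact containment condition together with Aldous's criterion, using that $A : C_c^{\infty}(\mbb{R}^d) \to \mc{B}_b(\mbb{R}^d)$ yields a uniform bound on $\int_s^t Af(X_r)\,dr$ for each fixed $f$.

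With the hypotheses verified, the Ethier--Kurtz construction produces $(\mbb{P}^x)_{x \in \mbb{R}^d}$ by iteratively refining the correspondence $x \mapsto \Pi_x$ through maximization of countably many resolvent functionals $R_\lambda h(x) := \mbb{E}_{\mbb{P}}\int_0^\infty e^{-\lambda t} h(X_t)\,dt$, with $(\lambda, h)$ ranging over a countable set dense in $(0,\infty) \times C_\infty(\mbb{R}^d)$. To obtain \eqref{pre-eq11} for a prescribed pair $(f,\lambda)$ I simply place this pair at the head of the countable sequence used for the refinement; each successive refinement is applied within the subset of $\Pi_x$ already saturating the $(f,\lambda)$-functional, and that subset is again convex, weakly closed, nonempty, and measurable in $x$, so the procedure does not break. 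The principal substantive point — and the only piece not mechanical from the references — is verifying (c) with the required measurability in the initial point $X_{\tau}(\omega)$, since the target set $\Pi_{\delta_{X_\tau(\omega)}}$ varies with $\omega$; once this is established, strong Markovianity of the selected family reduces to bookkeeping, and \ref{cont-1-iii} follows by integration of $x \mapsto \mbb{P}^x$ against $\mu$.
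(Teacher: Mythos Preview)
Your overall strategy coincides with the paper's: both reduce to the Ethier--Kurtz selection machinery, specifically \cite[Theorem 4.5.19]{ethier} together with \cite[Lemma 4.5.11(b)]{ethier}, and both obtain \eqref{pre-eq11} by placing the given pair $(f,\lambda)$ first in the enumeration used to refine the correspondence $x\mapsto\Pi_x$. So the architecture is the same.

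There is, however, a genuine soft spot in your verification of hypothesis (a). You claim that closedness of $\Pi_\mu$ under weak limits follows from ``a routine approximation of the integrand $\omega\mapsto\int_s^t Af(\omega(r))\,dr$ by functionals continuous on the Skorokhod space.'' This is precisely the point where the hypotheses of the theorem bite: here $A$ maps into $\mc{B}_b(\mbb{R}^d)$, not $C_b(\mbb{R}^d)$, and for merely bounded Borel $Af$ the map $\omega\mapsto\int_s^t Af(\omega(r))\,dr$ is \emph{not} continuous on $D[0,\infty)$, nor is there a routine uniform approximation by continuous functionals (pointwise approximation of $Af$ by continuous functions does not suffice to pass weak limits). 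The paper is explicit about this: it notes that Ethier \& Kurtz \emph{assume} $Af$ continuous in Lemma~4.5.11(b) and asserts that ``a close look at the proof shows, however, that this condition is not needed.'' In other words, the paper does not prove closedness of $\Pi_\mu$ either; rather it claims that the internal structure of the Ethier--Kurtz argument survives without it. Your write-up obscures this by calling the step routine. You should either (i) trace through \cite[Lemma 4.5.11(b)]{ethier} and identify exactly which parts of the argument use continuity of $Af$ and why they still go through with $Af\in\mc{B}_b(\mbb{R}^d)$, or (ii) follow the paper and flag the point honestly rather than asserting a closedness argument that does not work as stated.
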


\begin{proof} 
	The first statement follows from Theorem 4.5.19 and (the proof of) Lemma 4.5.11(b) in \cite{ethier}. Let us remark that Ethier \& Kurtz assume in Lemma 4.5.11(b) that $Af$ is continuous; a close look at the proof shows, however, that this condition is not needed. The existence of a Markovian selection satisfying \eqref{pre-eq11} is a direct consequence of the proof of Theorem 4.5.19, choose $f_1 := f$ in the proof of Theorem 4.5.19.
\end{proof}

\begin{proof}[Proof of Theorem~\ref{cont-1}]
	Since $q$ is locally bounded and satisfies \eqref{cont-eq3}, Lemma~\ref{cont-12} shows that $Af$ is bounded for any $f \in C_c^{\infty}(\mbb{R}^d)$. Moreover,  it follows from \eqref{cont-eq3} and Proposition~\ref{cont-9} that the compact containment condition \eqref{eq-cpt} is satisfied. Applying Theorem~\ref{pre-5} finishes the proof.
\end{proof}

\begin{proof}[Proof of Corollary~\ref{cont-2}]
	The assertion is a direct consequence of Theorem~\ref{exi-1} and Theorem~\ref{cont-1}.
\end{proof}

Let us illustrate the Markovian selection theorems with some examples. Since there is a close connection between weak solutions to L\'evy-driven SDEs and martingale problems, Theorem~\ref{cont-1} allows us to deduce the following statement.

\begin{kor}[Markovian selection for L\'evy-driven SDEs] \label{cont-3} 
	Let $(L_t)_{t \geq 0}$ be a $d$-dimensional L\'evy process with characteristic exponent $\psi$. If $b, \sigma: \mathbb{R}^k \to \mathbb{R}^{k \times d}$ are functions of sublinear growth such that the L\'evy-driven SDE\begin{equation}
		dX_t = b(X_{t-}) \, dt + \sigma(X_{t-}) \, dL_t, \qquad X_0 \sim \mu, \label{cont-eq55}
	\end{equation}
	has a weak solution for any $\mu \in \mc{P}(\mbb{R}^d)$, then there exists a conservative strong Markov process $(X_t,\mc{F}_t,\mbb{P}^x; x \in \mbb{R}^d, t \geq 0)$ such that $(X_t)_{t \geq 0}$ is a weak solution to \eqref{cont-eq55} with respect to $\mbb{P}^{\mu} := \int \mbb{P}^x \, \mu(dx)$ for any $\mu \in \mc{P}(\mbb{R}^d)$.
\end{kor}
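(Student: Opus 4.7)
The plan is to reduce Corollary~\ref{cont-3} directly to Theorem~\ref{cont-1} via the SDE--martingale problem equivalence alluded to at the end of Section~\ref{def}. Setting
\[
	q(x,\xi) := -ib(x) \cdot \xi + \psi(\sigma(x)^T \xi), \qquad x,\xi \in \mathbb{R}^k,
\]
and letting $A := -q(x,D)$, the result of \cite{kurtz} says that $\mathbb{P}$ is a solution to the $(A,C_c^{\infty}(\mathbb{R}^k))$-martingale problem with initial distribution $\mu$ if and only if the canonical process is, under $\mathbb{P}$, a weak solution of \eqref{cont-eq55} with $X_0 \sim \mu$. Thus the hypothesis that a weak solution exists for every $\mu$ is exactly the statement $\Pi_\mu \neq \emptyset$ for every $\mu \in \mathcal{P}(\mathbb{R}^k)$, i.e.\ the standing assumption of Theorem~\ref{cont-1}.

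The heart of the proof is then to verify that $q$ is locally bounded and satisfies the smallness condition \eqref{cont-eq3}. Local boundedness is immediate: $b$ and $\sigma$ are of sublinear growth, hence bounded on $\overline{B(0,R)}$, and $\psi$ is locally bounded (being a continuous negative definite function), so
\[
	\sup_{|x| \leq R} \sup_{|\xi| \leq 1} \bigl( |b(x) \cdot \xi| + |\psi(\sigma(x)^T \xi)| \bigr) < \infty.
\]
For \eqref{cont-eq3}, I would estimate, for $|y| \leq R$ and $|\xi| \leq R^{-1}$,
\[
	|q(y,\xi)| \leq \frac{\sup_{|y| \leq R} |b(y)|}{R} + \sup_{|\eta| \leq \rho(R)} |\psi(\eta)|,
	\qquad \rho(R) := \frac{\sup_{|y| \leq R} |\sigma(y)|}{R}.
\]
By sublinear growth, $\sup_{|y| \leq R} |b(y)|/R \to 0$ and $\rho(R) \to 0$ as $R \to \infty$; since $\psi$ is continuous with $\psi(0)=0$, the second term also tends to $0$. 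Hence \eqref{cont-eq3} holds.

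With both preliminary properties in hand, Theorem~\ref{cont-1} applies and produces a family $(\mathbb{P}^x)_{x \in \mathbb{R}^k}$ such that $(X_t,\mathcal{F}_t,\mathbb{P}^x;x\in\mathbb{R}^k,t\geq 0)$ is a conservative strong Markov process and, for each $\mu \in \mathcal{P}(\mathbb{R}^k)$, $\mathbb{P}^\mu := \int \mathbb{P}^x \,\mu(dx)$ solves the $(A,C_c^\infty(\mathbb{R}^k))$-martingale problem with initial distribution $\mu$. Invoking the Kurtz equivalence once more converts each $\mathbb{P}^\mu$ into a weak solution of \eqref{cont-eq55}, finishing the argument. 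The only non-routine point is the verification of \eqref{cont-eq3}, which is precisely where the sublinear growth assumption on $b$ and $\sigma$ is used; everything else is a direct translation between the SDE and the martingale problem.
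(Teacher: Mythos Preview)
Your proposal is correct and follows essentially the same route as the paper: define the symbol $q(x,\xi) = -ib(x)\cdot\xi + \psi(\sigma(x)^T\xi)$, use the Kurtz equivalence to pass between weak solutions of \eqref{cont-eq55} and solutions of the $(A,C_c^\infty)$-martingale problem, and then invoke Theorem~\ref{cont-1}. The only difference is that the paper simply asserts that the sublinear growth of $b,\sigma$ makes the hypotheses of Theorem~\ref{cont-1} ``easy to verify'', whereas you spell out the estimates for local boundedness and for \eqref{cont-eq3} explicitly; your verification is accurate.
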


The assumption on the existence of a weak solution to \eqref{cont-eq55} is, in particular, satisfied if $b$ and $\sigma$ are continuous. For L\'evy-driven SDEs with discontinuous coefficients we refer to Theorem~\ref{exi-11} for a sufficient condition for the existence. Note that \eqref{cont-eq55} covers SDEs of the form \begin{equation}
	dX_t = b(X_{t-}) \, dt + f(X_{t-}) \, dB_t + g(X_{t-}) \, dJ_t \label{cont-eq5}
\end{equation}
where $(J_t)_{t \geq 0}$ is a pure jump L\'evy process and $(B_t)_{t \geq 0}$ an independent Brownian motion; simply choose $L_t = (B_t,J_t)$ in \eqref{cont-eq55}. Let us mention that Anulova \& Pragarauskas \cite{anu77} proved a Markovian selection theorem for SDEs \eqref{cont-eq5} for the particular case that $f$ is uniformly elliptic.

\begin{proof}[Proof of Corollary~\ref{cont-3}]
	Set $q(x,\xi) := -ib(x) \cdot \xi + \psi(\sigma(x)^T \cdot \xi)$, $x,\xi \in \mbb{R}^k$, and denote by $A$ the pseudo-differential operator with symbol $q$. Since $b$ and $\sigma$ are of sublinear growth, it follows easily that $q$ satisfies the assumptions of Theorem~\ref{cont-1}, and therefore there exists a conservative strongly Markovian solution to the $(A,C_c^{\infty}(\mbb{R}^d))$-martingale problem. It is known, see e.\,g.\ \cite{kurtz}, that any solution to the $(A,C_c^{\infty}(\mbb{R}^d))$-martingale problem with initial distribution $\mu$ is a weak solution to \eqref{cont-eq55}; this finishes the proof.
\end{proof}

\begin{kor}[Stable-dominated processes] \label{cont-5}
	Let $\kappa: \mbb{R}^d \times \mbb{R}^d \backslash \{0\} \to (0,\infty)$ be a mapping such that $x \mapsto \kappa(x,y)$ is continuous for all $y \in \mbb{R}^d \backslash \{0\}$. If there exist finite constants $c_1,c_2>0$ and $\alpha, \beta \in (0,2)$ such that \begin{equation*}
		\kappa(x,y) \leq c_1 \frac{1}{|y|^{d+\alpha}} \I_{\{|y|<1\}} + c_2 \frac{1}{|y|^{d+\beta}} \I_{\{|y| \geq 1\}} \fa x \in \mbb{R}^d, \, \, y \in \mbb{R}^d \backslash \{0\},
	\end{equation*}
	then there exists a conservative strongly Markovian solution to the martingale problem for the pseudo-differential operator with symbol \begin{equation*}
		q(x,\xi) := \int_{\mbb{R}^d \backslash \{0\}} \left( 1-e^{iy \cdot \xi} + iy \cdot \xi \I_{(0,1)}(|y|) \right) \kappa(x,y) \, dy, \qquad x, \xi \in \mbb{R}^d.
	\end{equation*}
\end{kor}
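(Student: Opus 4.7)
The plan is to verify the hypotheses of Corollary~\ref{cont-2} for the pseudo-differential operator with symbol $q$, whose characteristics are $b\equiv 0$, $Q \equiv 0$, and $\nu(x,dy) = \kappa(x,y)\,dy$. Since $\kappa$ is dominated by the fixed (stable-like) kernel
\[
	K(y) := c_1 |y|^{-d-\alpha}\I_{\{|y|<1\}} + c_2 |y|^{-d-\beta}\I_{\{|y|\geq 1\}},
\]
which is integrable against $\min\{1,|y|^2\}$, and since $x \mapsto \kappa(x,y)$ is continuous for each fixed $y \neq 0$, dominated convergence applied to the L\'evy--Khintchine integrand shows that $x \mapsto q(x,\xi)$ is continuous for every $\xi \in \mbb{R}^d$, i.\,e.\ $q$ has continuous coefficients. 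The same domination yields
\[
	\sup_{x \in \mbb{R}^d} \int_{y \neq 0} \min\{|y|^2,1\}\,\nu(x,dy) \leq \int_{y \neq 0} \min\{|y|^2,1\} K(y)\,dy < \infty,
\]
so \eqref{bdd-coeff} holds even globally; in particular $q$ is locally bounded.

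The main step is to verify the decay condition \eqref{cont-eq3}. Fix $R > 1$ and $|\xi| \leq R^{-1}$, and split the integral defining $q(x,\xi)$ into the three regions $\{|y|<1\}$, $\{1 \leq |y| \leq R\}$, and $\{|y|>R\}$. On the first region the Taylor estimate $|1-e^{iy\cdot\xi}+iy\cdot\xi|\leq C|\xi|^2|y|^2$ combined with the bound $\kappa(x,y) \leq c_1 |y|^{-d-\alpha}$ gives a contribution of order $|\xi|^2 \leq R^{-2}$, uniformly in $x$. On the second region $|1-e^{iy\cdot\xi}| \leq |\xi||y| \leq |y|/R$, so the contribution is controlled by $c_2 R^{-1} \int_1^R r^{-\beta}\,dr$, which tends to $0$ as $R \to \infty$ regardless of whether $\beta<1$, $\beta=1$, or $\beta>1$. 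On the third region $|1-e^{iy\cdot\xi}| \leq 2$ and the contribution is at most $2\int_{|y|>R} K(y)\,dy = O(R^{-\beta})$. Since all three bounds are independent of $x$ and vanish as $R \to \infty$, we conclude
\[
	\lim_{R \to \infty} \sup_{|x| \leq R}\sup_{|\xi| \leq R^{-1}} |q(x,\xi)| = 0,
\]
which is exactly \eqref{cont-eq3}.

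With all three hypotheses of Corollary~\ref{cont-2} in hand, the corollary immediately yields a conservative strongly Markovian solution $(X_t,\mc{F}_t,\mbb{P}^x;x \in \mbb{R}^d,t\geq 0)$ to the $(-q(x,D),C_c^{\infty}(\mbb{R}^d))$-martingale problem. I do not expect any genuine obstacle here; the only point requiring care is the handling of mid-range jumps $1 \leq |y| \leq R$ in the decay estimate, which must be dealt with case-by-case in $\beta$ since $|y|\cdot |y|^{-d-\beta}$ is not uniformly integrable in $R$ when $\beta \leq 1$. Using the truncation at $|\xi||y| \leq 1$ (forced by $|\xi| \leq R^{-1}$) absorbs this issue cleanly.
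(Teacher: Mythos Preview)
Your proposal is correct and follows the same route as the paper: verify bounded coefficients and the decay condition \eqref{cont-eq3} via the elementary bounds $|1-e^{iz}+iz|\leq \tfrac12|z|^2$ and $|1-e^{iz}|\leq\min\{2,|z|\}$, then apply Corollary~\ref{cont-2}. The paper's proof is terser (it leaves the three-region splitting implicit and does not spell out the dominated-convergence argument for continuity in $x$), but the substance is identical.
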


We will see in Section~\ref{har} that Corollary~\ref{cont-5} can be used to establish a Harnack inequality. Corollary~\ref{cont-5} applies, in particular, to stable-like processes. If we choose, for instance, $\kappa(x,y) = |y|^{-d-\alpha(x)}$ for a continuous mapping $\alpha: \mbb{R}^d \to (0,2)$ satisfying $\inf_x \alpha(x)>0$, we find that there exists a strongly Markovian solution to the the martingale problem for the pseudo-differential operator with symbol $q(x,\xi) = |\xi|^{\alpha(x)}$.

\begin{proof}[Proof of Corollary~\ref{cont-5}]
	Using the elementary estimates \begin{equation*}
		|1-e^{iz}+iz| \leq \frac{1}{2} |z|^2 \quad \text{and} \quad |1-e^{iz}| \leq \min\{2,|z|\}
	\end{equation*}
	it is not difficult to see that $q$ has bounded coefficients and satisfies the continuity condition \eqref{cont-eq3}. Applying Corollary~\ref{cont-2} proves the assertion.
\end{proof}

The next example shows that the Markovian selection from Theorem~\ref{cont-1} fails, in general, to be unique. Moreover, it shows that we can, in general, not choose the Markovian selection in such a way that the associated semigroup has nice mapping properties (e.\,g.\ the Feller property or the $C_b$-Feller property). 

\begin{bsp} \label{cont-7}
	Consider the martingale problem for the pseudo-differential operator $A$ with symbol $q(x,\xi) = -2 i \xi \sgn(x) \sqrt{|x|}$. Clearly, (the distribution of) a process $(X_t)_{t \geq 0}$ is a solution to the $(A,C_c^{\infty}(\mbb{R}))$-martingale problem with initial distribution $\mu=\delta_x$ if $(X_t)_{t \geq 0}$ satisfies the ordinary differential equation \begin{equation}
		dX_t = 2 \sgn(X_t) \sqrt{|X_t|} \, dt, \qquad X_0 = x. \label{cont-eq9}
	\end{equation}
	It is not difficult to check that both \begin{equation*}
		X_t := \begin{cases} (t+\sqrt{x})^2, & x \geq 0, \\ - (t+\sqrt{-x})^2, & x<0 \end{cases} \quad \text{and} \quad Y_t := \begin{cases} (t+\sqrt{x})^2, & x>0, \\ - (t+\sqrt{x})^2, & x \leq 0 \end{cases}
	\end{equation*}
	are Markovian solutions, and hence uniqueness of a Markovian selection fails. Moreover, we note that \eqref{cont-eq9} has a unique solution for any $x \neq 0$, and therefore it follows easily that $\lim_{x \downarrow 0} \mbb{E}^x f(X_t) = f(t^2)$ and $\lim_{x \uparrow 0} \mbb{E}^x f(X_t) = f(-t^2)$ for any selection $(X_t,\mbb{P}^x)_{t \geq 0,x \in \mbb{R}^d}$ of solutions to the $(A,C_c^{\infty}(\mbb{R}))$-martingale problem; in particular, the $C_b$-Feller property and the Feller property fail to hold for any Markovian selection.
\end{bsp}

\section{Applications} \label{ana}

In this section we present two applications of Markovian selection theorems in the theory of non-local operators and equations.  The first one is a Harnack inequality for pseudo-differential operators of variable order, cf.\ Section~\ref{har}, and the second one concerns viscosity solutions to a certain integro-differential equation, cf.\ Section~\ref{vis}. 

\subsection{Harnack inequality for non-local operators of variable order} \label{har}

Harnack inequalities are an important tool in the study of partial differential equations. In the last years there has been an increasing interest in Harnack inequalities for functions that are harmonic with respect to a L\'evy-type operator.  Due to the non-local nature of these operators, it is not possible to use the same techniques as for differential operators. It has turned out that the probabilistic approach via martingale problems is very powerful. In order to use this method, it is, however, necessary to know that there exists a strongly Markovian solution to the martingale problem, and many important contributions, e.\,g.\ \cite{bass,levin,kass,wada}, have to \emph{assume} the existence of a strongly Markovian solution. It is, in general, difficult to prove that the martingale problem is well-posed, and this made it so far difficult to check this assumption. Our Markovian selection theorem, Theorem~\ref{cont-1}, allows us to prove the existence without the much harder well-posedness of the martingale problem. \par
In this section we combine the Markovian selection theorem with the results from \cite{bass} to prove a Harnack inequality for operators of variable order. In \cite{bass} Bass and Ka{\ss}mann established a Harnack inequality for pseudo-differential operators of the form \begin{equation*}
	Au(x) = \int_{\mbb{R}^d \backslash \{0\}} (u(x+y)-u(x)-\nabla u(x) \cdot y \I_{(0,1)}(|y|)) \, \kappa(x,y) \, dy, \qquad u \in C_c^{\infty}(\mbb{R}^d), \, \, x \in \mbb{R}^d;
\end{equation*}
their result requires only weak assumptions on the kernel $\kappa$, but for the proof they have to assume that there exists a strongly Markovian solution to the $(A,C_c^{\infty}(\mbb{R}^d))$-martingale problem. Thanks to the Markovian selection theorem, we can give mild assumptions which ensure the existence of such a strongly Markovian solution. \par
We recall the following definition. As usual, $(X_t)_{t \geq 0}$ denotes the canonical process with canonical filtration $\mc{F}_t := \sigma(X_s; s \leq t)$.

\begin{defn} \label{har-3}
	For a linear operator $A: \mc{D}(A) \to \mc{B}_b(\mbb{R}^d)$ and $x \in \mbb{R}^d$ let $\mbb{P}^x$ be a solution to the $(A,\mc{D}(A))$-martingale problem with initial distribution $\delta_x$. A function $u \in \mc{B}_b(\mbb{R}^d)$ is called \emph{harmonic in an open set $D \subseteq \mbb{R}^d$} if $(u(X_{t \wedge \tau_D}))_{t \geq 0}$ is a $\mbb{P}^x$-martingale for each $x \in D$; here \begin{equation*}
		\tau_D := \inf\{t>0; X_t \notin D\}
	\end{equation*}
	denotes the first exit time of $D$.
\end{defn}

The following theorem is the main result in this section.

\begin{thm} \label{har-1}
	Let $\kappa: \mbb{R}^d \times  \mbb{R}^d \backslash \{0\} \to (0,\infty)$ be a Borel measurable mapping such that $x \mapsto \kappa(x,y)$ is continuous for each $y \in \mbb{R}^d \backslash \{0\}$. Assume that there exist constants $c_1,c_2,c_3,c_4>0$ and $\alpha,\beta,\kappa \in (0,2)$ such that the following conditions are satisfied. 
	\begin{enumerate}[label*=\upshape (H\arabic*),ref=\upshape H\arabic*] 
		\item\label{H1} $\kappa(x,y) \leq c_1 |y|^{-d-\kappa}$ for all $x \in \mbb{R}^d$, $|y| > 2$,
		\item\label{H2} $c_2 |y|^{-d-\alpha} \leq \kappa(x,y) \leq c_3 |y|^{-d-\beta}$ for all $x \in \mbb{R}^d$, $0<|y| \leq 2$
		\item\label{H3} $\kappa(x,x-z) \leq c_4 \kappa(y,y-z)$ for all $|x-y| \leq 1$, $|x-z| \geq 1$, $|y-z| \geq 1$.
	\end{enumerate}
	Then the following statements hold for the pseudo-differential operator $A$ with symbol 
	\begin{equation*}
			q(x,\xi) := \int_{\mbb{R}^d \backslash \{0\}} \left( 1-e^{iy \cdot \xi} + iy \cdot \xi \I_{(0,1)}(|y|) \right) \, \kappa(x,y) \, dy, \qquad x,\xi \in \mbb{R}^d,
		\end{equation*}
	\begin{enumerate}[label={\upshape(\roman*)}]
		\item\label{har-1-i} There exists a conservative strong Markov process $(X_t,\mc{F}_t,\mbb{P}^x; x \in \mbb{R}^d,t \geq 0)$ such that $\mbb{P}^x$ solves for each $x \in \mbb{R}^d$ the $(A,C_c^{\infty}(\mbb{R}^d))$-martingale problem with initial distribution $\delta_x$.
		\item\label{har-1-ii} If $\beta-\alpha < 1$ then a Harnack inequality holds; more precisely, if $u \in \mc{B}_b(\mbb{R}^d)$, $u \geq 0$, is harmonic on an open ball $B(x_0,2r)$, then there exists $C>0$ such that \begin{equation}
			\forall x,y \in B(x_0,r): \quad u(x) \leq C u(y); \label{harnack}
		\end{equation}
		the constant $C$ depends on $r$ and $c_1,\ldots,c_4$, but not $x_0$ and $u$.
	\end{enumerate}
\end{thm}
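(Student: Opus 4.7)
Part~\ref{har-1-i} should follow directly from Corollary~\ref{cont-5}. The only work is to observe that the bounds (H1) and (H2) combine into an estimate of the form required by that corollary: for $0<|y|\le 1$, (H2) yields $\kappa(x,y)\le c_3|y|^{-d-\beta}$, and for $|y|\ge 1$ the bounds (H1) (for $|y|>2$) together with (H2) restricted to the intermediate range $1\le|y|\le 2$ (where $|y|^{-d-\beta}$ and $|y|^{-d-\kappa}$ are pinched between positive constants) combine into a single estimate of the form $C|y|^{-d-\kappa}$. Together with the assumed continuity of $x\mapsto \kappa(x,y)$, this places us inside the hypotheses of Corollary~\ref{cont-5} (with the two exponents $\beta$ and $\kappa$ playing the role of the $\alpha,\beta$ there), and a conservative strongly Markovian solution $(X_t,\mc F_t,\mbb P^x)_{x\in\mbb R^d}$ is produced.

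For part~\ref{har-1-ii} the strategy is to invoke the Harnack inequality of Bass and Ka{\ss}mann \cite{bass} directly. Their proof is carried out under exactly the assumptions (H1)--(H3) together with the gap condition $\beta-\alpha<1$, \emph{plus} the unverified assumption that there exists a strongly Markovian solution to the $(A,C_c^\infty(\mbb R^d))$-martingale problem (with harmonicity interpreted as in Definition~\ref{har-3}). Part~\ref{har-1-i} supplies precisely this missing ingredient, so we can run their argument verbatim against the Markovian selection from~\ref{har-1-i} to obtain \eqref{harnack}. No new probabilistic construction is needed.

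The main obstacle is therefore not analytic but a bookkeeping one: making sure that the notion of harmonic function in Definition~\ref{har-3} is the one actually used throughout \cite{bass}, and that all auxiliary probabilistic ingredients invoked there (strong Markov property, non-explosion, measurability of $x\mapsto\mbb P^x$, maximal estimates for exit times of small balls such as Proposition~\ref{cont-9}, etc.) are genuine consequences of Theorem~\ref{cont-1} applied to $A$ rather than additional conditions on $(X_t,\mbb P^x)$. Once this dictionary between the two setups is in place, the Harnack inequality \eqref{harnack} transfers without modification, with a constant $C$ depending only on $r$ and $c_1,\ldots,c_4$ exactly as in \cite{bass}.
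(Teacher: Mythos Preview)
Your proposal is correct and follows exactly the approach of the paper, which simply states that Theorem~\ref{har-1} is a direct consequence of Corollary~\ref{cont-5} and \cite{bass}. Your additional bookkeeping remarks (combining the bounds \eqref{H1}, \eqref{H2} on the intermediate range $1\le |y|\le 2$, and checking that the probabilistic framework of \cite{bass} matches Definition~\ref{har-3} and Theorem~\ref{cont-1}) merely spell out what the paper leaves implicit.
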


Theorem~\ref{har-1} is a direct consequence of Corollary~\ref{cont-5} and \cite{bass}. Let us point out that the assumptions \eqref{H2}, \eqref{H3} and $\beta-\alpha<1$ are taken from \cite{bass} and are thus needed to prove the Harnack inequality. The condition on the large jumps \eqref{H1} is slightly stronger than in \cite{bass}; this is the price which we have to pay for the existence of a strongly Markovian solution to the martingale problem (more precisely, \eqref{H1} is needed to ensure that the continuity condition \eqref{cont-eq3} holds). \par
Note that the Harnack inequality \eqref{harnack} can be used to study the regularity of harmonic functions, see \cite[Section 4]{kass}.

\begin{bem} \label{har-5}
It is not difficult to see that a function $u \in C_b^2(\mbb{R}^d)$ is harmonic in $B(x_0,2r)$ if $Au(x)=0$ for all $x \in B(x_0,2r)$. Theorem~\ref{har-1}\eqref{har-1-ii} shows, in particular, that any such function $u \geq 0$ satisfies the Harnack inequality \begin{equation*}
	\sup_{x \in B(x_0,r)} u(x) \leq C \inf_{x \in B(x_0,r)} u(x)
\end{equation*}
for some finite constant $C=C(r,c_1,c_2,c_3,c_4)>0$ not depending on $u$ and $x_0$.  \end{bem}

\subsection{Viscosity solutions} \label{vis}

Viscosity solutions were originally introduced by Lions \& Crandall to study non-linear PDEs of the form \begin{equation*}
	F(x,u(x),\nabla u(x),\nabla^2 u(x))=0. 
\end{equation*}
The concept has been successfully to extended to nonlinear non-local equations \begin{equation}
	G(x,u(x),\nabla u(x),\nabla^2 u(x), u(\cdot))=0, \label{vis-eq3}
\end{equation}
and over the last two decades viscosity solutions have turned out to be one of the most important notions for generalized solutions. Non-linear non-local equations \eqref{vis-eq3} appear naturally in the theory of stochastic processes, for instance in the study of Feller processes (cf.\ \cite{ltp}) and sublinear Markov processes (see e.\,g.\ \cite{julian,denk} and the references therein). Costantini \& Kurtz \cite{cost} showed that there is a close connection between martingale problems and viscosity solutions to the integro-differential equation \begin{equation}
	\lambda u(x)-Au(x) = f(x), \qquad \lambda>0, f \in C_b(\mbb{R}^d), \label{vis-eq1}
\end{equation}
roughly speaking, they showed that a comparison principle for \eqref{vis-eq1} implies the well-posedness of the martingale problem for the operator $A$. Recently, this result has been used by Zhao \cite{zhao} to derive the existence of a unique weak solution to the SDE \begin{equation*}
	dX_t = b(X_{t-}) \, dt + dL_t
\end{equation*}
driven by an $\alpha$-stable L\'evy process, $\alpha \in (0,1)$, under weak regularity assumptions on the drift $b$. We will use the Markovian selection theorem, Theorem~\ref{cont-1}, to give a sufficient condition which ensures that the function \begin{equation*}
	u(x) := \sup_{\mbb{P} \in \Pi_x} \mbb{E}_{\mbb{P}} \left( \int_{(0,\infty)} e^{-\lambda t} f(X_t) \, dt \right)
\end{equation*}
is a viscosity solution to \eqref{vis-eq1}; here $(X_t)_{t \geq 0}$ is the canonical process and $\Pi_x$ is the family of probability measures on $\Omega = D[0,\infty)$ which solve the $(A,C_c^{\infty}(\mbb{R}^d))$-martingale problem with initial distribution $\delta_x$. 

\begin{defn}  \label{vis-3}
	Let $A:  C_b^2(\mbb{R}^d) \to C_b(\mbb{R}^d)$ be a linear operator, $\lambda>0$ and $f \in C_{\infty}(\mbb{R}^d)$. \begin{enumerate}
		\item An upper semicontinuous bounded function $u$ is a \emph{viscosity subsolution} to $\lambda u-Au = f$ if the implication \begin{equation*}
			\sup_{x \in \mbb{R}^d} (u(x)-\phi(x)) = u(x_0)-\phi(x_0) \implies \lambda u(x_0)-A\phi(x_0) \leq f(x_0)
		\end{equation*}
		holds for any $\phi \in C_b^2(\mbb{R}^d)$, $x_0 \in \mbb{R}^d$.
		\item A lower semicontinuous bounded function $v$ is a \emph{viscosity supersolution} to $\lambda u-Au = f$ if \begin{equation*}
			\inf_{x \in \mbb{R}^d} (v(x)-\phi(x)) = v(x_0)-\phi(x_0) \implies v(x_0)- A \phi(x_0) \geq f(x_0)
		\end{equation*}
		for any $\phi \in C_b^2(\mbb{R}^d)$, $x_0 \in \mbb{R}^d$.
		\item A function $u \in C_b(\mbb{R}^d)$ is a \emph{viscosity solution} to $\lambda u-Au=f$ if $u$ is both a viscosity subsolution and a viscosity supersolution.
	\end{enumerate}
\end{defn}

For a pseudo-differential operator $A$ with negative definite symbol $q$, the assumption $A(C_b^2(\mbb{R}^d)) \subseteq C_b(\mbb{R}^d)$ in Definition~\ref{vis-3} means that the symbol $q$ is continuous (with respect to $x$ and $\xi$) and has bounded coefficients.

\begin{thm} \label{vis-5}
	Let $A$ be a pseudo-differential operator with symbol $q$ of the form \eqref{symbol}, $q(x,0)=0$, and let $f \in C_{\infty}(\mbb{R}^d)$, $f \geq 0$. Assume that $x \mapsto q(x,\xi)$ is continuous, $q$ has bounded coefficients and \begin{equation}
		\lim_{R \to \infty} \sup_{|y| \leq R} \sup_{|\xi| \leq R^{-1}} |q(y,\xi)| = 0. \label{vis-eq11}
	\end{equation}
	\begin{enumerate}
		\item\label{vis-5-i} The function \begin{equation*}
			u(x) := \sup_{\mbb{P} \in \Pi_x} \mbb{E}_{\mbb{P}} \left( \int_{(0,\infty)} e^{-\lambda t} f(X_t) \, dt \right), \qquad x \in \mbb{R}^d
		\end{equation*}
		is a viscosity subsolution to \begin{equation*}
			\lambda u-Au = f;
		\end{equation*}
		here $\Pi_x$ denotes the set of probability measures on $D[0,\infty)$ which are a solution to the $(A,C_c^{\infty}(\mbb{R}^d))$-martingale problem with initial distribution $\delta_x$.
		\item\label{vis-5-ii} If $u$ is lower semicontinuous, then $u$ is a viscosity solution to $\lambda u-Au = f$.
	\end{enumerate}
\end{thm}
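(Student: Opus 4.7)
The plan is to apply the Markovian selection theorem (Theorem~\ref{cont-1}\eqref{cont-1-iv}) to obtain a strong Markov family $(\mbb{P}^x)_{x \in \mbb{R}^d}$ which attains the supremum defining $u$, derive from it a dynamic programming identity via the Markov property, and then extract the viscosity inequalities by inserting a test function and differentiating via Dynkin's formula. The hypotheses of Theorem~\ref{cont-1} are satisfied: continuity and boundedness of the coefficients of $q$ yield, via Theorem~\ref{exi-1}, a solution to the $(A,C_c^{\infty}(\mbb{R}^d))$-martingale problem for every initial distribution, and \eqref{vis-eq11} is exactly the continuity condition \eqref{cont-eq3} required by Theorem~\ref{cont-1}. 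Hence we obtain $(\mbb{P}^x)_{x}$ satisfying $\mbb{E}_{\mbb{P}^x}\int_{(0,\infty)} e^{-\lambda t} f(X_t)\,dt = u(x)$ for every $x$, and splitting the integral at a deterministic time $t>0$, the Markov property together with this optimality gives the dynamic programming identity
\[
	u(x_0) = \mbb{E}_{\mbb{P}^{x_0}}\left(\int_0^t e^{-\lambda s} f(X_s)\,ds + e^{-\lambda t}\,u(X_t)\right), \qquad t \geq 0.
\]

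To invoke Definition~\ref{vis-3} in part \eqref{vis-5-i} one must first verify that $u$ is upper semicontinuous. For $x_n \to x_0$, choose $\mbb{P}_n \in \Pi_{x_n}$ with $\mbb{E}_{\mbb{P}_n}\int_0^\infty e^{-\lambda t} f(X_t)\,dt \geq u(x_n) - 1/n$. By Corollary~\ref{exi-8} the sequence $(\mbb{P}_n)$ is tight, and by standard stability of martingale problems with continuous coefficients (the same argument as in the proof of Theorem~\ref{exi-9}, specialized to the case $A_n = A$), every weak accumulation point lies in $\Pi_{x_0}$. Since $f \in C_{\infty}(\mbb{R}^d)$, the functional $\omega \mapsto \int_0^\infty e^{-\lambda t} f(\omega(t))\,dt$ is bounded and continuous on $D[0,\infty)$ (the integrand is dominated by $e^{-\lambda t}\|f\|_\infty$, and càdlàg paths in the Skorohod topology agree off an at most countable set), so $\limsup_n u(x_n) \leq u(x_0)$.

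Now I derive the subsolution property. Let $\phi \in C_b^2(\mbb{R}^d)$ satisfy $\sup_y (u-\phi) = u(x_0)-\phi(x_0)$, i.e.\ $u(y) \leq \phi(y)+u(x_0)-\phi(x_0)$ for all $y$. Because $q$ has bounded coefficients, a compact-support truncation extends the martingale property of the $(A,C_c^\infty)$-problem to $C_b^2$, so Dynkin's formula gives
\[
	\mbb{E}_{\mbb{P}^{x_0}}\!\left[e^{-\lambda t}\phi(X_t)\right] = \phi(x_0) + \mbb{E}_{\mbb{P}^{x_0}}\!\int_0^t e^{-\lambda s}\bigl(A\phi - \lambda\phi\bigr)(X_s)\,ds.
\]
Substituting the pointwise bound on $u$ into the DPP and rearranging produces
\[
	\bigl(u(x_0)-\phi(x_0)\bigr)(1-e^{-\lambda t}) \leq \mbb{E}_{\mbb{P}^{x_0}}\!\int_0^t e^{-\lambda s}\bigl(f + A\phi - \lambda\phi\bigr)(X_s)\,ds.
\]
Dividing by $t$ and letting $t \to 0^+$, right-continuity of $(X_s)$ at $s=0$ with $X_0 = x_0$, continuity of $f$ and of $A\phi$ (the latter from continuity of $q$), and bounded convergence yield $\lambda(u(x_0)-\phi(x_0)) \leq f(x_0) + A\phi(x_0) - \lambda\phi(x_0)$, i.e.\ $\lambda u(x_0) - A\phi(x_0) \leq f(x_0)$. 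For part \eqref{vis-5-ii}, the assumed lower semicontinuity combined with the upper semicontinuity just established makes $u$ continuous; running the identical calculation with $\phi$ touching $u$ from below and every inequality reversed delivers $\lambda u(x_0) - A\phi(x_0) \geq f(x_0)$, so $u$ is also a viscosity supersolution.

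The main obstacle is the upper semicontinuity step, which couples the tightness from \eqref{vis-eq11} (via Corollary~\ref{exi-8}) with the stability of the set of solutions under joint weak convergence of measures and initial distributions; this stability uses continuity of $x \mapsto q(x,\xi)$ in an essential way. The $f$-optimality of the Markovian selection, supplied by Theorem~\ref{cont-1}\eqref{cont-1-iv}, is what promotes the usual one-sided dynamic programming inequality into an equality and thereby allows both sub- and supersolution inequalities to be obtained by the same calculation. The remaining ingredients -- the extension of Dynkin's formula from $C_c^\infty(\mbb{R}^d)$ to $C_b^2(\mbb{R}^d)$ by compact-support truncation, and the Ces\`aro-type limit $t \to 0^+$ -- are routine in view of the bounded coefficients and the right-continuity of sample paths.
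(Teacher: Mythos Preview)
Your proposal is correct and follows essentially the same route as the paper: Markovian selection attaining the supremum (Theorem~\ref{cont-1}\eqref{cont-1-iv}/Corollary~\ref{cont-2}), upper semicontinuity via tightness and stability of $\Pi_x$ under weak limits, extension of the martingale property to $C_b^2(\mbb{R}^d)$, a dynamic programming identity, and then the viscosity inequalities via a small-time limit. The only notable difference is that the paper runs the DPP at the stopping time $\tau_r = \inf\{t>0: |X_t-x_0|>r\}\wedge r$ and divides by $\mbb{E}^{x_0}\tau_r$, whereas you use deterministic times $t$ and divide by $t$; both yield the same pointwise limit because $f$, $\phi$ and $A\phi$ are continuous and bounded and the paths are right-continuous, so this is a cosmetic variation rather than a different argument.
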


If $u$ is lower semicontinuous (hence, by \eqref{vis-5-i}, continuous), then a result by Barles et al. \cite{barles} shows that - under rather general assumptions -- $u$ is H\"{o}lder continuous. This, in turn, would allow us to derive new results on the well-posedness of martingale problems using a similar approach as in \cite{zhao}. It seems, however, that the lower semicontinuity of $u$ is, in general, difficult to check; in fact, Example~\ref{cont-7} shows that $u$ fails, in general, to be (lower semi)continuous. 

\begin{proof}[Proof of Theorem~\ref{vis-5}]
	The first statement is a direct consequence of \cite[Lemma 3.5]{cost} and Proposition~\ref{cont-9} but we prefer to give a direct proof which gives both \eqref{vis-5-i} and \eqref{vis-5-ii}. By Theorem~\ref{exi-1} we have $\Pi_x \neq \emptyset$ and therefore $u(x) \in \mbb{R}$ is well-defined for each $x \in \mbb{R}^d$. If $(x_n)_{n \in \mbb{N}} \subseteq \mbb{R}^d$ is such that $x_n \to x \in \mbb{R}^d$, then Proposition~\ref{cont-9} shows that for any $\mbb{P}_n \in \Pi_{x_n}$, $n \geq 1$, the sequence $(\mbb{P}_n)_{n \in \mbb{N}}$ is tight. It is not difficult to see that this implies that $u$ is upper semicontinuous, see \cite[Lemma 3.4]{cost} for more details.  \par
	Now let $f \in C_{\infty}(\mbb{R}^d)$ and $\phi \in C_b^2(\mbb{R}^d)$.  By Corollary~\ref{cont-2} there exists $\mbb{P}^x \in \Pi_x$, $x \in \mbb{R}^d$, such that $(X_t,\mc{F}_t,\mbb{P}^x; x \in \mbb{R}^d, t \geq 0)$ is a strong Markov process with respect to the canonical filtration $\mc{F}_t := \sigma(X_s; s \leq t)$ and \begin{equation}
		u(x) = \mbb{E}^x\left( \int_0^{\infty} e^{-\lambda t} f(X_t) \, dt \right) \fa x \in \mbb{R}^d; \label{vis-eq17}
	\end{equation}
	here (and throughout the remaining part of the proof) we use the shorthand $\mbb{E}^x := \mbb{E}_{\mbb{P}^x}$.	Using a standard approximation procedure and the fact that $\|Af\|_{(2)} \leq c \|f\|_{(2)}$, $f \in C_b^2(\mbb{R}^d)$, for some absolute constant $c>0$, it follows easily that $\mbb{P}^x$ is a solution to the $(A,C_b^2(\mbb{R}^d))$-martingale problem with initial distribution $\delta_x$. Consequently, \begin{equation*}
		\phi(X_t)-\phi(X_0)- \int_0^t A \phi(X_s) \, ds
	\end{equation*}
	is a $\mbb{P}^x$-martingale, and this implies that \begin{equation}
		\phi(x) = \int_{(0,\infty)} e^{-\lambda t} \mbb{E}^x(\lambda \phi(X_t)-A \phi(X_t)) \, dt, \label{vis-eq19}
	\end{equation}
	cf.\ \cite[Lemma 2.9]{cost}. Thus, by \eqref{vis-eq17} and \eqref{vis-eq19}, \begin{equation*}
		u(x)-\phi(x) = \int_{(0,\infty)} e^{-\lambda t} \mbb{E}^x \big( f(X_t)- \lambda \phi(X_t)+ A \phi(X_t) \big) \, dt.
	\end{equation*}
	If we define \begin{equation*}
		\tau_r^x := \tau_r := \min\{t>0; |X_t-x|>r\} \wedge r
	\end{equation*}
	then we find from the strong Markov property of $(X_t)_{t \geq 0}$ that \begin{align*}
		u(x)-\phi(x)
		&= \mbb{E}^x \left( \int_{(0,\tau_r)} e^{-\lambda t} [f(X_t)-\lambda \phi(X_t) + A \phi(X_t)] \, dt \right) \\
		&\quad + \mbb{E}^x \left( e^{-\lambda \tau_r} \mbb{E}^{X_{\tau_r}} \left[ \int_{(0,\infty)} (f(X_t)- \lambda \phi(X_t) + A \phi(X_t)) \, dt \right] \right).
	\end{align*}
	Invoking \eqref{vis-eq17} and \eqref{vis-eq19} we find \begin{align} \begin{aligned} 
		u(x) - \phi(x)
		&= \mbb{E}^x \left( \int_{(0,\tau_r)} e^{-\lambda t} [f(X_t)-\lambda \phi(X_t) + A \phi(X_t)] \, dt \right) \\
		&\quad + \mbb{E}^x \big( e^{-\lambda \tau_r} [u(X_{\tau_r})- \varphi(X_{\tau_r})] \big). \end{aligned} \label{vis-eq21}
	\end{align}
	Now let $\phi \in C_b^2(\mbb{R}^d)$ and $x_0 \in \mbb{R}^d$ be such that \begin{equation*}
		\sup_{x \in \mbb{R}^d} (u(x)-\phi(x)) = u(x_0)-\phi(x_0).
	\end{equation*}
	Without loss of generality, we may assume $u(x_0) =\phi(x_0)$. Then $u \leq \phi$ yields $u(X_{\tau_r})-\varphi(X_{\tau_r}) \leq 0$, and so \begin{equation*}
		0 = u(x_0)-\phi(x_0) \leq  \mbb{E}^{x_0} \left( \int_{(0,\tau_r)} e^{-\lambda t} [f(X_t)-\lambda \phi(X_t) + A \phi(X_t)] \, dt \right).
	\end{equation*}
	By the right-continuity of the sample paths of $(X_t)_{t \geq 0}$ we have $\mbb{E}^{x_0}(\tau_r)>0$ for $r>0$ sufficiently small; moreover, trivially $\mbb{E}^{x_0}(\tau_r) \leq r < \infty$. Dividing both sides of the previous equation by $\mbb{E}_{x_0} \tau_r$ and letting $r \to 0$ we obtain \begin{equation*}
		0 \leq f(x_0)- \lambda \phi(x_0) + A \phi(x_0).
	\end{equation*}
	As $\phi(x_0) = u(x_0)$, this shows that $u$ is a viscosity subsolution. On the other hand, if \begin{equation*}
		\inf_{x \in \mbb{R}^d} (u(x)-\phi(x)) = u(x_0)-\phi(x_0) = 0,
	\end{equation*}
	then \eqref{vis-eq21} gives \begin{align*}
		0 = u(x_0)-\phi(x_0) \geq  \mbb{E}^{x_0} \left( \int_{(0,\tau_r)} e^{-\lambda t} [f(X_t)-\lambda \phi(X_t) + A \phi(X_t)] \, dt \right),
	\end{align*}
	and we conclude that \begin{equation*}
		0 \geq f(x_0)- \lambda \phi(x_0)+ A \phi(x_0) = f(x_0)-\lambda u(x_0) + A \phi(x_0);
	\end{equation*}
	this proves \eqref{vis-5-ii}.
\end{proof}

\appendix

\section{}

In the first part of the proof of Corollary~\ref{exi-13} we used heat kernel estimates from \cite{matters} to establish the Krylov estimate \eqref{exi-eq35}. Let us explain in more detail how to obtain the required estimates;  \cite[Theorem 3.8]{matters} gives heat kernel estimates for the transition densities of Feller processes with symbols of the form \begin{equation*}
	q(x,\xi) := \psi_{h(x)}(\xi), \qquad x,\xi \in \mbb{R}^d;
\end{equation*}
here $(\psi_{\kappa})_{\kappa \in I}$, $I \subseteq \mbb{R}^k$, is a family of continuous negative functions and $h: \mbb{R}^d \to I$ a H\"older continuous function. In the proof of Corollary~\ref{exi-13} (Step 1) we are interested in the particular case that \begin{equation}
	\psi_{\kappa}(\xi) = i \xi \kappa_1 + \psi(\kappa_2 \xi), \qquad  h(x) := \begin{pmatrix} f(x) \\ g(x) \end{pmatrix} \qquad x,\xi \in \mbb{R},  \label{app-eq11}
\end{equation}
where $\psi$ is the characteristic exponent of the driving L\'evy process $(L_t)_{t \geq 0}$, $f: \mbb{R} \to \mbb{R}$, $g: \mbb{R} \to (0,\infty)$ are H\"older continuous functions such that \begin{equation*}
	f^L := \inf_{x \in \mbb{R}^d} f(x) \leq  \sup_{x \in \mbb{R}^d} f(x) =: f^U < \infty \qquad 0<g^L := \inf_{x \in \mbb{R}^d} g(x) \leq \sup_{x \in \mbb{R}^d} g(x) =: g^U < \infty
\end{equation*}
and $\kappa:=(\kappa_1,\kappa_2) \in I := [f^L,f^U] \times [g^L,g^U]$. The assumptions on the L\'evy process $(L_t)_{t \geq 0}$ ensure that \cite[Theorem 3.8]{matters} is indeed applicable. In order to state the heat kernel estimates we have to recall some assumptions on the family $(\psi_{\kappa})_{\kappa \in I}$: \begin{enumerate}
	\item There exists a H\"older continuous function $\gamma_{\infty}: I \to (0,2]$ and constants $c_1,c_2>0$ such that $\gamma_{\infty}^L := \inf_{\kappa \in I} \gamma_{\infty}(\kappa)>0$ and \begin{equation*}
	\re \psi_{\kappa}(\xi) \geq c_1 |\xi|^{\gamma_{\infty}(\kappa)} \quad \text{and} \quad |\psi_{\kappa}(\xi)| \leq c_2 |\xi|^{\gamma_{\infty}(\kappa)} \fa   |\xi| \gg 1, \, \, \kappa \in I.
\end{equation*}
	\item There exists a measurable mapping $\gamma_0: I \to (0,2]$ and a constant $c_3>0$ such that $\gamma_0^L := \inf_{\kappa \in I} \gamma_0(\kappa)>0$ and \begin{equation*}
		|\psi_{\kappa}(\xi)| \leq c_3 |\xi|^{\gamma_0(\kappa)} \fa \kappa \in I,  \, \, |\xi| \leq 1.
	\end{equation*}
\end{enumerate}
\cite[Theorem 3.8]{matters} states that, under suitable further assumptions on $(\psi_{\kappa})_{\kappa \in I}$ (all of them are satisfied in the proof of Corollary~\ref{exi-13}, Step 1), there exists a constant $C_1>0$ such that the transition density $p$ of the Feller process with symbol $q(x,\xi) = \psi_{h(x)}(\xi)$ satisfies \begin{equation}
	|p(t,x,y)| \leq C_1 S(x-y,h(y),t) + C_1 \frac{1}{1+|x-y|^{d+\gamma_0^L \wedge \gamma_{\infty}^L}}, \qquad x,y \in \mbb{R}^d, t \in (0,T], \label{app-eq13}
\end{equation}
where \begin{equation*}
	S(z,\kappa,t) := \begin{cases} t^{-d/\gamma_{\infty}(\kappa)}, & |z| \leq t^{1/\gamma_{\infty}(\kappa)} \wedge 1, \\ \frac{t}{|z|^{d+\gamma_{\infty}(\kappa)}}, & t^{1/\gamma_{\infty}(\kappa)} < |z| \leq 1, \\ \frac{t}{|z|^{d+ \gamma_{\infty}(\kappa) \wedge \gamma_0(\kappa)}}, & |z| > 1. \end{cases}
\end{equation*}
Integrating \eqref{app-eq13} with respect to $t$, it follows easily that \begin{equation}
	\int_0^T |p(t,x,y)| \, dt \leq C_2 Q(x-y), \qquad x,y \in \mbb{R}^d \label{app-eq15}
\end{equation}
for some constant $C_2>0$ and \begin{equation*}
	Q(z) := |z|^{-d- \gamma_0^L \wedge \gamma_{\infty}^L} \I_{\{|z| \geq 1\}} + (1+|\log|z|| + |z|^{-d+\gamma_{\infty}^L}) \I_{\{0<|z|<1\}}+ \I_{\{z=0\}}.
\end{equation*}
A close look at the proof of \cite[Theorem 3.8]{matters} shows that there exists a continuous function $F: (0,\infty)^{10} \to (0,\infty)$ such that \begin{equation}
	C_2 =  F \left( d, T, c_2,c_3, \|h\|_{\varrho(h)}, \frac{1}{\varrho(\gamma_{\infty} \circ h)},\frac{1}{\|\gamma_{\infty} \circ h\|_{\varrho(\gamma_{\infty} \circ h)}}, \frac{1}{\gamma_0^L}, \frac{1}{\gamma_{\infty}^L}, \frac{1}{c_1} \right); \label{app-eq17}
\end{equation}
here $\varrho(h)$ and $\varrho(\gamma_{\infty} \circ h)$ denote the H\"older exponent of $h$ and $\gamma_{\infty} \circ h$, respectively, and $\|\cdot\|_{\varrho}$ is the H\"older norm. This means, for instance, that the constant $C_2$ is, in general, going blow up if the H\"older exponent of $\gamma_{\infty} \circ h$ tends to $0$.  For the particular case we consider in the proof of Corollary~\ref{exi-13}, cf.\ \eqref{app-eq11}, we have \begin{align*}
	\gamma_{\infty}(\kappa_1,\kappa_2) &= \begin{cases} \beta, & f^U = f^L = 0, \\ \max\{1,\beta\}, & \text{otherwise} \end{cases} \\
	\gamma_0(\kappa_1,\kappa_2) &= \begin{cases} \alpha, & f^U = f^L = 0, \\ \min\{1,\alpha\}, & \text{otherwise} \end{cases} \\
	c_1 &:= g^L \quad c_2 := c_3 := f^U + g^U
\end{align*}
where $\alpha,\beta \in (0,2]$ denote constants from \eqref{L1}-\eqref{L3}, cf.\ Corollary~\ref{exi-13}. Note that $\gamma_{\infty}$ and $\gamma_0$ do not depend on $\kappa=(\kappa_1,\kappa_2)$, and therefore $\gamma_{\infty} \circ h$ is Lipschitz continuous for \emph{any} mapping $h$. Hence, \begin{equation*}
	C_2 = \tilde{F} \left( T, g^U, f^U, \|f\|_{\varrho(f)}, \|g\|_{\varrho(g)},\frac{1}{g^L} \right)
\end{equation*}
for some continuous function $\tilde{F}$. By \eqref{app-eq17} this gives \eqref{exi-eq34}, and hence \eqref{exi-eq35}. \par \medskip

For the proof of Corollary~\ref{exi-13} we also used the following result which concerns the approximation of Borel measurable functions by H\"older continuous functions. 

\begin{lem} \label{app-1}
	Let $f:\mathbb{R}^d \to \mathbb{R}$ be a measurable bounded function. Then there exist sequences $(f_n)_{n \in \mbb{N}} \subseteq C^{\infty}(\mbb{R}^d)$ and $(\alpha_n)_{n \in \mbb{N}} \subseteq (0,1)$ such that \begin{enumerate}
			\item $f_n$ is $\alpha_n$-H\"{o}lder continuous for each $n \in \mbb{N}$ and $$M := \sup_{n \in \mbb{N}} \|f_n\|_{\alpha_n} < \infty,$$
			\item $f_n(x) \to f(x)$ for Lebesgue almost all $x \in \mbb{R}^d$,
			\item $\|f_n\|_{\infty} \leq \|f\|_{\infty}$ and $f_n(x) \geq \inf_{y \in \mbb{R}^d} f(y)$ for each $n \in \mbb{N}$, $x \in \mbb{R}^d$.
		\end{enumerate}
\end{lem}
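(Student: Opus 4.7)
The plan is to construct the $f_n$ by standard mollification and then to interpolate between a uniform $L^\infty$-bound and a Lipschitz bound (whose constant blows up polynomially in $n$) in order to obtain a uniform $\alpha_n$-Hölder bound for a carefully chosen sequence $\alpha_n \downarrow 0$.

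Fix a function $\phi \in C_c^\infty(\mathbb{R}^d)$ with $\phi \geq 0$, $\int \phi\,dx = 1$, and $\spt \phi \subseteq \overline{B(0,1)}$. Set $\phi_n(x) := n^d \phi(nx)$ and define $f_n := f \ast \phi_n$. Since $\phi_n \geq 0$ and $\int \phi_n\,dx = 1$, the pointwise bounds
\begin{equation*}
	\inf_{y \in \mathbb{R}^d} f(y) \leq f_n(x) \leq \|f\|_\infty, \qquad x \in \mathbb{R}^d, \, n \in \mathbb{N},
\end{equation*}
follow at once from the definition of convolution, and smoothness of $f_n$ is immediate. The pointwise convergence $f_n(x) \to f(x)$ for Lebesgue-a.e.\ $x$ is the classical Lebesgue differentiation theorem applied at Lebesgue points of $f$ (which form a set of full measure, cf.\ any standard reference on real analysis).

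The core step is the Hölder estimate. Differentiating under the convolution and using $\nabla \phi_n(x) = n^{d+1} (\nabla \phi)(nx)$ gives
\begin{equation*}
	\|\nabla f_n\|_\infty \leq \|f\|_\infty \|\nabla \phi_n\|_{L^1} = \|f\|_\infty n \|\nabla \phi\|_{L^1} =: C_0 \|f\|_\infty n,
\end{equation*}
so that $f_n$ is Lipschitz with constant $C_0 \|f\|_\infty n$. Combining this with the trivial bound $|f_n(x) - f_n(y)| \leq 2\|f\|_\infty$ and using the elementary interpolation $a \wedge b \leq a^{1-\alpha} b^\alpha$, we obtain for every $\alpha \in (0,1)$
\begin{equation*}
	|f_n(x) - f_n(y)| \leq (2\|f\|_\infty)^{1-\alpha} (C_0 \|f\|_\infty n)^\alpha |x-y|^\alpha \leq 2 \|f\|_\infty (C_0 n)^\alpha |x-y|^\alpha.
\end{equation*}
Hence $\|f_n\|_\alpha \leq \|f\|_\infty (1 + 2(C_0 n)^\alpha)$ for any $\alpha \in (0,1)$.

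It remains to choose $\alpha_n$ so that $(C_0 n)^{\alpha_n}$ stays bounded. Taking $\alpha_n := 1/\log(n + C_0 + 2)$ we have $(C_0 n)^{\alpha_n} = \exp(\alpha_n \log(C_0 n)) \leq e$ for all $n \geq 1$, and so
\begin{equation*}
	\sup_{n \in \mathbb{N}} \|f_n\|_{\alpha_n} \leq \|f\|_\infty (1 + 2e) < \infty,
\end{equation*}
which yields the desired uniform Hölder bound. The remaining properties have already been verified, which completes the proof.

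The only mildly delicate point is the choice of $\alpha_n$; everything else reduces to standard properties of mollifiers. I expect no serious obstacle.
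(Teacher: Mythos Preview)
Your approach is essentially the paper's: mollify, then exploit that each $f_n$ is Lipschitz with constant growing like $n$ and choose $\alpha_n \downarrow 0$ fast enough to turn this into a uniform H\"older bound. The paper phrases the interpolation as a case split on $|x-y| \gtrless 1/L_n$ and arrives at $\|f_n\|_{\alpha_n} \leq 4\|f\|_\infty$; your use of $a \wedge b \leq a^{1-\alpha} b^\alpha$ is the same argument in algebraic dress.

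One arithmetic slip: your claim $(C_0 n)^{\alpha_n} \leq e$ with $\alpha_n = 1/\log(n+C_0+2)$ requires $C_0 n \leq n + C_0 + 2$, which fails for large $n$ whenever $C_0 = \|\nabla\phi\|_{L^1} > 1$. Take instead $\alpha_n := 1/\log(e + C_0 n)$ (or simply normalise $\phi$ so that $C_0 \leq 1$) and the bound goes through verbatim.
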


\begin{proof}
	\textbf{Step 1:} For any Lipschitz continuous function $f: \mbb{R}^d \to \mathbb{R}$ there exists $\alpha>0$ such that $\|f\|_{\alpha} \leq 4\|f\|_{\infty}$. \par
	\emph{Indeed:} Denote by $L>0$ the Lipschitz constant of $f$ and choose $\alpha>0$ sufficiently small such that $2L^{\alpha} \leq 3$. If $|x-y| \geq 1/L$, then \begin{equation*}
			\frac{|f(x)-f(y)|}{|x-y|^{\alpha}} \leq 2L^{\alpha} \|f\|_{\infty} \leq 3 \|f\|_{\infty}.
		\end{equation*}
	If $|x-y| < 1/L$, then \begin{equation*}
		\frac{|f(x)-f(y)|}{|x-y|^{\alpha}} \leq L |x-y|^{1-\alpha} \leq L^{\alpha} \leq 3 \|f\|_{\infty}.
	\end{equation*}
	\textbf{Step 2:} Let $\chi \in C_c^{\infty}(\mbb{R}^d)$ be such that $0 \leq \chi \leq 1$ and $\int_{\mbb{R}^d} \chi(y) \, dy = 1$. If we set $\chi_n(x) := 1/n^d \chi(x/n)$ and $f_n := f \ast \chi_n \in C^{\infty}(\mbb{R}^d)$, then $f_n \to f$ (Lebesgue)almost everywhere. Moreover, \begin{equation*}
			\|f_n\|_{\infty} \leq \|f\|_{\infty}
	\end{equation*}
	and therefore it follows from Step 1 that we can choose $\alpha_n>0$ such that $\|f_n\|_{\alpha_n} \leq 4 \|f\|_{\infty}$ for all $n \in \mbb{N}$. Finally, \begin{equation*}
			f_n(x) = \int f(y) \chi_n(y-x) \, dy \geq \left( \inf_{y \in \mbb{R}^d} f(y) \right) \int \chi_n(y-x) \, dy = \inf_{y \in \mbb{R}^d} f(y). \qedhere
		\end{equation*}
\end{proof}		

\begin{ack}
	The proof of Lemma~\ref{app-1} goes back to David C.\ Ullrich, and the author gratefully acknowledges this contribution. Moreover, the author would like to thank Niels Jacob and Ren\'e Schilling for their valuable remarks and suggestions. The author also thanks the \emph{Institut national des sciences appliquées de Toulouse} for its hospitality during her stay in Toulouse, where a part of this work was accomplished.
\end{ack}

\end{document}